\newtheorem{thm}{Theorem}[section]
\newtheorem{lem}[thm]{Lemma}
\newtheorem{prop}[thm]{Proposition}
\theoremstyle{remark}
\newcommand{\be}{\begin{equation}}
\newcommand{\ee}{\end{equation}}
\newcommand{\bea}{\begin{eqnarray}}
\newcommand{\eea}{\end{eqnarray}}
\newcommand{\Bea}{\begin{eqnarray*}}
\newcommand{\Eea}{\end{eqnarray*}}
\def\CF{{\mathcal F}}
\def\CH{{\mathcal H}}
\def\C{{\mathbb C}}
\def\H{{\mathbb H}}
\def\N{{\mathbb N}}
\def\R{{\mathbb R}}
\def\1{\text{\bf {1}}}
\begin{document}

\title[Riesz transforms and multipliers]
{ Riesz transforms and multipliers for the Grushin operator}
\author{ K. Jotsaroop, P. K. Sanjay  and S. Thangavelu}

\address{Department of Mathematics\\ Indian Institute
of Science\\Bangalore-560 012}
\email{jyoti@math.iisc.ernet.in, sanjay@math.iisc.ernet.in, veluma@math.iisc.ernet.in}

\date{\today}
\keywords{}
\subjclass{42C, 42C10, 43A90}
\thanks{ }

\begin{abstract}
We show that Riesz transforms associated to the Grushin operator
$ G = -\Delta-|x|^2 \partial_t^2 $ are bounded on $ L^p(\R^{n+1}).$ We also
establish an analogue of H\"{o}rmander-Mihlin multiplier theorem and study 
Bochner-Riesz means associated to the Grushin operator. The
main tools used are Littlewood-Paley theory and an operator valued
Fourier multiplier theorem due to L. Weis.

\end{abstract}

\maketitle

\section{Introduction}
\setcounter{equation}{0}

The aim of this paper is to study Riesz transforms and multipliers
associated to the Grushin operator $ G = -\Delta-|x|^2 \partial_t^2 $
on $ \R^n \times \R.$ The spectral decomposition of this non-negative
operator is explicitly known:
$$ Gf(x,t) = \frac{1}{2\pi}\int_{-\infty}^\infty  e^{-i\lambda t} \left(
\sum_{k=0}^\infty (2k+n)|\lambda| P_k(\lambda)f^\lambda(x)\right)
d\lambda $$
where $ P_k(\lambda)$ are the spectral projections of the scaled Hermite
operator $ H(\lambda) = -\Delta+\lambda^2 |x|^2 $ so that
$$  H(\lambda) = \sum_{k=0}^\infty (2k+n)|\lambda| P_k(\lambda) $$ and
$$ f^\lambda(x) = \int_{-\infty}^\infty f(x,t) e^{i\lambda t} dt.$$
Thus, given a bounded function $ m $ on $ \R $ we can define the multiplier
transformation $ m(G) $ by setting
$$ m(G)f(x,t)= \frac{1}{2\pi}\int_{-\infty}^\infty  e^{-i\lambda t} \left(
\sum_{k=0}^\infty m((2k+n)|\lambda|) P_k(\lambda)f^\lambda(x)\right)
d\lambda .$$ It is obvious that $ m(G) $ is bounded on $ L^2(\R^{n+1}) $ but
for the boundedness of $ m(G) $ on other $ L^p $ spaces we need to assume
more conditions on $ m.$

Recall that $ H(\lambda) $ can be written as
$$ H(\lambda) = \frac{1}{2} \sum_{j=1}^n (A_j(\lambda)A_j(\lambda)^*+
A_j(\lambda)^*A_j(\lambda)) $$
where
$$ A_j(\lambda) = -\frac{\partial}{\partial x_j}+\lambda x_j ,
 A_j(\lambda)^* = \frac{\partial}{\partial x_j}+\lambda x_j $$
are the creation and annihilation operators (for $ \lambda > 0$).
The operators
$$ R_j(\lambda) =  A_j(\lambda)H(\lambda)^{-\frac{1}{2}}, R_j^*(\lambda) =
 A_j(\lambda)^*H(\lambda)^{-\frac{1}{2}} $$
are Riesz transforms associated to the Hermite operator. It is therefore
natural to consider the operators
$$ R_jf(x,t)  = \frac{1}{2\pi}\int_{-\infty}^\infty  e^{-i\lambda t}
 R_j(\lambda)f^\lambda(x) d\lambda $$
and $ R_j^* $ similarly defined in terms of $ R_j^*(\lambda) $ and call them
the Riesz transforms for the Grushin operators. Again these Riesz transforms
are trivially bounded on $ L^2 $ but their $ L^p $ boundedness is far from trivial.
We prove:

\begin{thm} For $ 1 < p < \infty $ the Riesz transforms $ R_j, R_j^*,
j = 1,2,...,n $ are all bounded on $ L^p(\R^{n+1}).$
\end{thm}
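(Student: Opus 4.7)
The plan is to apply the operator-valued Fourier multiplier theorem of L.\ Weis. Indeed, the map $f\mapsto f^\lambda$ is the one-dimensional Fourier transform in $t$, so the definition of $R_j$ exhibits it as an operator-valued Fourier multiplier with symbol $\lambda\mapsto R_j(\lambda)\in\mathcal{B}(L^p(\mathbb{R}^n))$. Since $L^p(\mathbb{R}^n)$ is UMD for $1 < p < \infty$, Weis' theorem gives boundedness of $R_j$ on $L^p(\mathbb{R}^{n+1})=L^p(\mathbb{R};L^p(\mathbb{R}^n))$ once both families
\[ \{R_j(\lambda):\lambda\neq 0\},\qquad \{\lambda R_j'(\lambda):\lambda\neq 0\} \]
are R-bounded in $\mathcal{B}(L^p(\mathbb{R}^n))$. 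The argument for $R_j^*$ is identical, with $A_j^*(\lambda)$ in place of $A_j(\lambda)$.

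To check these hypotheses I would exploit the natural scaling of the Hermite operator. Setting $\delta_\lambda f(x):=|\lambda|^{n/4}f(\sqrt{|\lambda|}\,x)$, the $L^2$-isometric dilation, a direct computation yields
\[ H(\lambda)=|\lambda|\,\delta_\lambda H(\sign\lambda)\delta_\lambda^{-1},\qquad A_j(\lambda)=|\lambda|^{1/2}\,\delta_\lambda A_j(\sign\lambda)\delta_\lambda^{-1}, \]
so that
\[ R_j(\lambda)=\delta_\lambda\, R_j(\sign\lambda)\,\delta_\lambda^{-1}. \]
Every $R_j(\lambda)$ is thus dilation-conjugate to one of the two fixed Hermite Riesz transforms $R_j(\pm 1)$, which are well known to be Calder\'on--Zygmund operators on $\mathbb{R}^n$. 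Their $\ell^2$-vector-valued extensions, together with Khinchin's inequality and the conjugation structure of the dilations $\{\delta_\lambda\}$, will give R-boundedness of the first family. For the second family, differentiating the conjugation formula in the parameter $s=\log|\lambda|$ using the infinitesimal generator $D=\tfrac{n}{4}+\tfrac12 x\cdot\nabla$ of $\{\delta_\lambda\}$ produces
\[ \lambda\frac{d}{d\lambda}R_j(\lambda)=\delta_\lambda\,[D,R_j(\sign\lambda)]\,\delta_\lambda^{-1}, \]
which reduces the second R-boundedness to the boundedness on $L^p(\mathbb{R}^n)$ of the commutator $[D,R_j(\pm 1)]$.

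The main obstacle is controlling this commutator. Since $D$ is essentially the Euler field on $\mathbb{R}^n$, one should show that $[D,R_j(\pm 1)]$ is again a Calder\'on--Zygmund operator by differentiating the integral kernel of $R_j(\pm 1)$ obtained from Mehler's formula for the Hermite heat kernel and the subordination identity $R_j(\pm 1)=\pi^{-1/2}\int_0^\infty A_j(\pm 1)e^{-sH(\pm 1)}s^{-1/2}\,ds$, and verifying the usual size and regularity estimates. Once that Calder\'on--Zygmund structure is in place, the dilation-conjugation argument closes the R-boundedness estimate and Weis' theorem then delivers the theorem; the second point to monitor is simply that the passage from $L^p(\mathbb{R}^n)$-valued Calder\'on--Zygmund bounds to honest R-boundedness uses, in an essential way, that we are working on a UMD space.
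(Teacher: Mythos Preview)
Your overall strategy coincides with the paper's: apply Weis' theorem, exploit the dilation conjugation $R_j(\lambda)=\delta_\lambda R_j(\pm 1)\delta_\lambda^{-1}$, and reduce the derivative family to the commutator of the Euler field with the fixed Hermite Riesz transform. The paper likewise observes that dilation conjugation preserves Calder\'on--Zygmund constants, so the Stempak--Torrea kernel estimates for $R_j(1)$ together with the Cordoba--Fefferman vector-valued inequality give R-boundedness of $\{R_j(\lambda)\}$.

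The one substantive difference is in how the commutator $[x\cdot\nabla, R_j(1)]$ (your ``main obstacle'') is handled. You propose to verify directly that it is Calder\'on--Zygmund by differentiating the Mehler kernel; this is plausible but delicate, because the individual terms $x_i\,\partial_{x_i}K(x,y)$ carry the unbounded factor $x_i$ and one must exhibit the cancellation in $(x\cdot\nabla_x+y\cdot\nabla_y)K$ by hand. The paper sidesteps this with an algebraic computation: writing $x\cdot\nabla=\tfrac{n}{2}I+\tfrac14\sum_i(A_i^{*2}-A_i^2)$ in terms of creation and annihilation operators reduces the problem to commutators $[A_i^2,R_j(1)]$ and $[A_i^{*2},R_j(1)]$, and then the shift relation $H^{-1/2}A_j=A_j(H+2)^{-1/2}$ yields closed-form expressions such as
\[
A_j[A_j,R_j(1)]=-A_j^3\bigl((H+2)^{-1/2}-H^{-1/2}\bigr)=A_j^3\int_0^1(H+2s)^{-3/2}\,ds.
\]
These are themselves Hermite-type Calder\'on--Zygmund operators with constants uniform in $s$, so R-boundedness of their dilation conjugates follows immediately. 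This algebraic route avoids the unbounded-coefficient issue entirely and is what you should aim for.

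One minor correction: the passage from Calder\'on--Zygmund bounds to R-boundedness on $L^p(\R^n)$ does not itself use the UMD property---on $L^p$ lattices, R-boundedness is equivalent (via Khinchin) to the $\ell^2$ square-function estimate, which CZ theory already provides. UMD enters only when you invoke Weis' theorem. Relatedly, note that mere $L^p$-boundedness of $[D,R_j(1)]$ would \emph{not} suffice for R-boundedness of its dilation conjugates; you genuinely need the CZ structure (or something equivalent), as you correctly state later.
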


We also consider higher order Riesz transforms which are defined as follows.
Let $\CH_{p,q} $ stand for the space of bigraded solid harmonics of bidegree
$ (p,q), p, q \in \N.$ Let $ G_\lambda(P) $ stand for the Weyl correspondence,
see Section 2.2 for the definition. Then it is known that $
G_\lambda(P)H(\lambda)^{-(p+q)/2} $ are bounded operators on $ L^p(\R^n), 1 <
p < \infty$\cite{pksstv}. We define higher order Riesz transforms for the Grushin operator
by setting
$$ R_Pf(x,t) = \frac{1}{2\pi}\int_{-\infty}^\infty e^{-i\lambda t}
G_\lambda(P)H(\lambda)^{-(p+q)/2}f^\lambda(x) d\lambda.$$ We prove:

\begin{thm} For any $ P \in \CH_{p,q} $ the Riesz transforms $ R_P $ are
bounded on $ L^p(\R^{n+1}), 1 < p < \infty.$
\end{thm}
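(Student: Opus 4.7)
The proof of Theorem 1.2 will follow the same template as Theorem 1.1, of which it is a higher-order generalisation. Taking the partial Fourier transform in $t$ identifies $R_P$ with the operator-valued Fourier multiplier of symbol
$$ M(\lambda) := G_\lambda(P)\,H(\lambda)^{-(p+q)/2}, \qquad \lambda\neq 0, $$
with values in $\mathcal{L}(L^p(\R^n))$. Since $L^p(\R^n)$ is a UMD space for $1 < p < \infty$, the operator-valued Fourier multiplier theorem of L.\ Weis reduces the desired bound on $L^p(\R^{n+1}) = L^p(\R;L^p(\R^n))$ to the R-boundedness of the two families $\{M(\lambda):\lambda\neq 0\}$ and $\{\lambda M'(\lambda):\lambda\neq 0\}$ in $\mathcal{L}(L^p(\R^n))$.

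To verify R-boundedness we exploit a scaling covariance of $M(\lambda)$. Letting $\delta_r f(x) = r^{n/(2p)} f(\sqrt{r}\,x)$ denote the $L^p$-isometric dilations, a direct substitution gives $A_j(\lambda) = |\lambda|^{1/2}\delta_{|\lambda|}A_j(\sign\lambda)\delta_{|\lambda|}^{-1}$ and hence
$$ H(\lambda)^{-(p+q)/2} = |\lambda|^{-(p+q)/2}\,\delta_{|\lambda|}\,H(\sign\lambda)^{-(p+q)/2}\,\delta_{|\lambda|}^{-1}, $$
while the bigraded structure of $P \in \CH_{p,q}$ forces
$$ G_\lambda(P) = |\lambda|^{(p+q)/2}\,\delta_{|\lambda|}\,G_{\sign\lambda}(P)\,\delta_{|\lambda|}^{-1}. $$
The powers of $|\lambda|$ cancel in the product, leaving $M(\lambda) = \delta_{|\lambda|}\,M(\sign\lambda)\,\delta_{|\lambda|}^{-1}$. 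Since $M(\pm 1)$ are bounded on $L^p(\R^n)$ by the result of \cite{pksstv} and the $\delta_r$ are $L^p$-isometries, R-boundedness of $\{M(\lambda)\}$ follows by a routine change-of-variables argument combined with the Khintchine equivalence of Rademacher averages and square functions on $L^p$.

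Differentiating the scaling identity in $\lambda$ yields
$$ \lambda M'(\lambda) = \delta_{|\lambda|}\,[\mathcal{E},\,M(\sign\lambda)]\,\delta_{|\lambda|}^{-1}, $$
where $\mathcal{E} = \tfrac{n}{2p} + \tfrac12\,x\cdot\nabla$ is the infinitesimal generator of $\{\delta_r\}$. The same dilation argument then reduces R-boundedness of $\{\lambda M'(\lambda)\}$ to $L^p(\R^n)$-boundedness of the commutator $[\mathcal{E}, M(\pm 1)]$. Using the commutation rules $[\mathcal{E}, A_j(\pm 1)] = \tfrac12 A_j(\pm 1)^*$ and $[\mathcal{E}, A_j(\pm 1)^*] = \tfrac12 A_j(\pm 1)$, which are immediate from the definitions, the contribution $[\mathcal{E}, G_{\pm 1}(P)]\,H(\pm 1)^{-(p+q)/2}$ may be expanded as a finite linear combination of operators of the same form $G_{\pm 1}(Q)\,H(\pm 1)^{-(p+q)/2}$ with $Q$ a polynomial of total degree $p+q$, each bounded by \cite{pksstv}. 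The remaining piece $G_{\pm 1}(P)\,[\mathcal{E}, H(\pm 1)^{-(p+q)/2}]$ is more delicate and must be treated by means of an integral representation of the fractional power together with standard Hermite heat-semigroup estimates. This commutator reduction is the principal technical obstacle of the proof: one has to match the order of $\mathcal{E}$ (quadratic in creation/annihilation operators) with the $(p+q)/2$-power of $H$ so that the result is manifestly $L^p$-bounded. Once this is carried out, Weis's theorem closes the argument.
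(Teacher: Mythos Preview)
Your central step --- that $M(\lambda)=\delta_{|\lambda|}M(\pm 1)\delta_{|\lambda|}^{-1}$ with $\delta_r$ an $L^p$-isometry and $M(\pm 1)$ bounded forces R-boundedness of $\{M(\lambda)\}$ --- is false, and this collapses the whole scheme. Conjugation by isometries that \emph{vary with the index} does not preserve square-function bounds. The translations $\tau_a f=f(\cdot-a)$ on $L^p(\R)$ are all isometries and satisfy $\tau_{r^{-1/2}}=\delta_r\tau_1\delta_r^{-1}$, yet $\{\tau_a:a\in\R\}$ is not R-bounded for $p\neq 2$: with $f_j=\chi_{[j,j+1]}$ and $a_j=-j$ one gets $\|(\sum_{1}^{N}|\tau_{a_j}f_j|^2)^{1/2}\|_p=N^{1/2}$ while $\|(\sum_{1}^{N}|f_j|^2)^{1/2}\|_p=N^{1/p}$. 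Concretely, in $\int(\sum_j|\delta_{|\lambda_j|}M(\pm1)\delta_{|\lambda_j|}^{-1}f_j|^2)^{p/2}dx$ each summand asks for its own substitution $x\mapsto|\lambda_j|^{-1/2}x$, and these are incompatible; no ``routine change of variables plus Khintchine'' can repair this. This is precisely why the paper, already for the first-order case, establishes R-boundedness through Calder\'on--Zygmund kernel estimates uniform in $\lambda$ (and then Cordoba--Fefferman), not through scaling alone. Your treatment of $\lambda M'(\lambda)$ inherits the same defect, and in addition the commutators $[\mathcal E,G_{\pm1}(P)]$ produce polynomials that need not lie in any $\CH_{p',q'}$, so the appeal to \cite{pksstv} is not immediate either.

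The paper's proof takes a different route and never verifies Weis's hypotheses for the higher-order symbol directly. It first reduces to the single polynomial $P_0(z)=z_1^p\bar z_2^q$ by irreducibility of the $U(n)$-action on $\CH_{p,q}$ together with the explicit metaplectic operators $\mu_\lambda(\sigma)$, checking that $f\mapsto(2\pi)^{-1}\int e^{-i\lambda t}\mu_\lambda(\sigma)f^\lambda\,d\lambda$ is bounded on $L^p(\R^{n+1})$ for generators $\sigma$ of $Sp(n,\R)$. For $P_0$ one has $G_\lambda(P_0)=A_2(\lambda)^qA_1(\lambda)^{*p}$, and the symbol $A_2(\lambda)^qA_1(\lambda)^{*p}H(\lambda)^{-(p+q)/2}$ is then factored as a product of first-order pieces of the type $A_j(\lambda)^*H(\lambda)^{-1/2}$, $H(\lambda)^{-1/2}A_j(\lambda)$ and $\lambda H(\lambda)^{-1}$. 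Thus $R_{P_0}$ is a composition of first-order Riesz transforms already treated in Theorem~1.1, and no separate commutator analysis for the derivative of the higher-order symbol is needed.
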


Riesz transforms and higher order Riesz transforms associated to the
sublaplacian  on the Heisenberg group $ \H^n $ has been studied by several
authors, see e.g. \cite{CMZ},\cite{ERS} and \cite{ST}. Riesz transforms on
$ \H^n $ and more generally on nilpotent Lie groups turn out to be singular
integral operators. As we do not have a group structure behind the Grushin
operator it is not possible to use their techniques. We use a different
method described below. Concerning general multiplier transforms
$ m(G) $ we prove the following result.

\begin{thm} Let $ N \geq \frac{n}{2}+1 $ be an integer and let $ m \in
C^N(\R^*) $ satisfy the estimates
$ |m^{(k)}(\lambda)| \leq C_k |\lambda|^{-k} $ for $ |\lambda| $ large for
all $ k = 0,1,2,..,N.$ Then $ m(G) $ is bounded on $ L^p(\R^{n+1})$ for all
$ 1 < p < \infty.$
\end{thm}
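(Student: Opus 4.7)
The plan is to realize $m(G)$ as an operator-valued Fourier multiplier in the $t$-variable and appeal to the operator-valued Fourier multiplier theorem of L.~Weis. Taking the partial Fourier transform in $t$ converts $m(G)$ into multiplication by the symbol
\[
M(\lambda)=m(H(\lambda))=\sum_{k\ge 0} m\bigl((2k+n)|\lambda|\bigr) P_k(\lambda),
\]
which is a bounded operator on $L^p(\R^n)$ for each $\lambda\ne 0$. Since $L^p(\R^n)$ is a UMD space for $1<p<\infty$ and $L^p(\R^{n+1})\cong L^p(\R;L^p(\R^n))$, Weis's theorem reduces the boundedness of $m(G)$ on $L^p(\R^{n+1})$ to the $R$-boundedness on $L^p(\R^n)$ of the two families $\{M(\lambda):\lambda\ne 0\}$ and $\{\lambda M'(\lambda):\lambda\ne 0\}$.

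To analyze these families I would use the intertwining $H(\lambda)=|\lambda|\,V_{|\lambda|}HV_{|\lambda|}^{-1}$ with $V_s f(x)=f(\sqrt{s}\,x)$ and $H=-\Delta+|x|^2$ the standard Hermite operator, which gives $M(\lambda)=V_{|\lambda|}\,m(|\lambda|H)\,V_{|\lambda|}^{-1}$. Differentiating in $\log|\lambda|$ and using that $V_s$ commutes with the Euler operator $E=x\cdot\nabla$ produces
\[
\lambda M'(\lambda) = \tilde m(H(\lambda)) + \tfrac{1}{2}\,[E,M(\lambda)],\qquad \tilde m(\mu)=\mu m'(\mu).
\]
Expressing $E$ in terms of the creation/annihilation operators $A_j(\lambda), A_j(\lambda)^*$ shows that the commutator is again a Hermite spectral multiplier whose symbol satisfies the same H\"ormander--Mihlin hypothesis, dressed by bounded raising/lowering combinations. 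Consequently both families are of the form $\{V_{|\lambda|}\Phi(|\lambda|H)V_{|\lambda|}^{-1}:\lambda\ne 0\}$ for various symbols $\Phi$ satisfying the hypothesis of the theorem.

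The problem therefore reduces to establishing, for such $\Phi$, the $R$-boundedness on $L^p(\R^n)$ of the single-scale family $\{V_s\Phi(sH)V_s^{-1}:s>0\}$. I would treat this via a Littlewood--Paley decomposition $\Phi=\sum_j\Phi_j$ with $\Phi_j$ supported in a dyadic annulus: the Hermite H\"ormander--Mihlin multiplier theorem (valid under $N\ge n/2+1$) bounds each $\Phi_j(sH)$ uniformly in $s$, and the pieces are assembled via the Hermite Littlewood--Paley square function. Khintchine's inequality converts the Rademacher sums defining $R$-boundedness into $\ell^2$-sums controlled by this square function, while the scaling covariance $V_s\Phi_j(sH)V_s^{-1}=\Phi_j(H(s))$ provides uniformity in $s$.

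The main obstacle is the vector-valued Hermite multiplier estimate
\[
\Bigl\|\bigl(\textstyle\sum_j|\Phi(s_jH)f_j|^2\bigr)^{1/2}\Bigr\|_p \lesssim \Bigl\|\bigl(\textstyle\sum_j|f_j|^2\bigr)^{1/2}\Bigr\|_p
\]
holding uniformly over $\{s_j\}\subset(0,\infty)$, which is the upgrade from uniform $L^p$-boundedness to an $R$-bound. This rests on Gaussian heat-kernel estimates and the finite-propagation-speed method for the Hermite wave kernel $\cos(t\sqrt{H})$, and is where the order condition $N\ge n/2+1$ is consumed. With this estimate in hand the hypotheses of Weis's theorem are met, and the $L^p(\R^{n+1})$-boundedness of $m(G)$ follows for every $1<p<\infty$.
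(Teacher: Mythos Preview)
Your overall strategy---reduce $m(G)$ to an operator-valued Fourier multiplier in $t$ and invoke Weis's theorem, so that the task becomes the $R$-boundedness of $M(\lambda)=m(H(\lambda))$ and of $\lambda M'(\lambda)$---is exactly the route the paper takes. Your computation $\lambda M'(\lambda)=\tilde m(H(\lambda))+\tfrac12[E,M(\lambda)]$ with $E=x\cdot\nabla$ is also correct and matches the paper's Lemma~3.4 once $E$ is rewritten as $\tfrac{n}{2}I+\tfrac14\sum_j(A_j^{*2}-A_j^2)$.

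The gap is in the next step. You assert that ``the commutator is again a Hermite spectral multiplier \ldots\ dressed by bounded raising/lowering combinations'' and then that ``both families are of the form $\{V_{|\lambda|}\Phi(|\lambda|H)V_{|\lambda|}^{-1}\}$''. That second sentence is not right: the commutators $[A_j^2,m(H(\lambda))]$ are \emph{not} functions of $H(\lambda)$. What one gets (using $m(H)A_j=A_jm(H+2)$, as the paper does) is a factorization of the type
\[
A_j(\lambda)^2\,\int_0^1 m'\bigl(H(\lambda)+2\lambda s\bigr)\,ds
= \bigl(A_j(\lambda)^2 H(\lambda)^{-1}\bigr)\cdot \int_0^1 H(\lambda)\,m'\bigl(H(\lambda)+2\lambda s\bigr)\,ds,
\]
a product of a second-order Hermite Riesz transform and a spectral multiplier. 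The $R$-boundedness of the first factor is a genuine additional ingredient: it comes from Calder\'on--Zygmund estimates (Stempak--Torrea kernel bounds, scale-invariant in $\lambda$) together with the Cordoba--Fefferman vector-valued inequality, exactly as in Section~2 of the paper. Your write-up folds this into ``bounded raising/lowering combinations'' and then drops it, which leaves the derivative family unhandled.

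On the $R$-boundedness of the spectral-multiplier part itself, your route and the paper's diverge. The paper proves the vector-valued inequality
\[
\Bigl\|\Bigl(\sum_j|m(H(\lambda_j))f_j|^2\Bigr)^{1/2}\Bigr\|_p \le C \Bigl\|\Bigl(\sum_j|f_j|^2\Bigr)^{1/2}\Bigr\|_p
\]
by running the Hermite Littlewood--Paley $g$-function machinery from Thangavelu's monograph: $g_1^\lambda$ is a Hilbert-space-valued CZ operator (hence satisfies Cordoba--Fefferman), one has the pointwise bound $g_{k+1}^\lambda(m(H(\lambda))f)\le C\,g_k^{*\lambda}(f)$ with $k\ge n/2$, and $g_k^{*\lambda}$ is controlled by the Hardy--Littlewood maximal function, so Fefferman--Stein closes the argument. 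Your alternative via a dyadic decomposition of $\Phi$, Khintchine, and ``Gaussian heat-kernel estimates and finite-propagation-speed for $\cos(t\sqrt{H})$'' is a legitimate modern strategy, but you have identified it as the main obstacle and not actually carried it out; in particular you would still need a pointwise or weighted bound that is uniform over the dilation parameter $s$ and that survives the $\ell^2$-sum, which is precisely what the paper extracts from the $g^{*\lambda}$/maximal-function comparison.
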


In his thesis \cite{RM} R. Meyer has studied the wave equation associated to
the Grushin operator in one dimension. There he mentions about the
possibility of proving a multiplier theorem (stated as a conjecture)
for the Grushin operator. The above theorem gives such a result though the
proof is completely different from what he had in mind. We also have results
for the wave equation which will be presented in a forthcoming paper.

We can also treat Bochner-Riesz means associated to the Grushin operator. For
$ R > 0 $ Bochner-Riesz means $ B_R^\delta f $ of order $ \delta > 0 $ are
defined by
$$ B_R^\delta f(x,t) = \frac{1}{2\pi}\int_{-\infty}^\infty  e^{-i\lambda t}
\bigg(1-\frac{H(\lambda)}{R}\bigg)_+^\delta f^\lambda(x) d\lambda .$$ Here
$ \bigg(1-\frac{H(\lambda)}{R}\bigg)_+^\delta f^\lambda $ are the
Bochner-Riesz means associated to $ H(\lambda).$ For $ \lambda $ fixed these
means have been studied by various authors, see \cite{T2}. Concerning
$ B_R^\delta f $ for the Grushin operator we prove the following result.

\begin{thm} For $ \delta >(n+1)/2+1/6,$ the Bochner-Riesz means $ B_R^\delta $
are uniformly bounded on $ L^p(\R^{n+1}), 1 < p < \infty.$
\end{thm}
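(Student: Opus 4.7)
The plan is to realise $B_R^\delta$ as an operator-valued Fourier multiplier in the $t$-variable taking values in $\CB(L^p(\R^n))$, and then invoke L.\ Weis's operator-valued multiplier theorem, in the same spirit as in the proofs of the preceding theorems. Setting $m_R(\lambda):=(1-H(\lambda)/R)_+^\delta$ and writing $B_R^\delta f = \CF_t^{-1}\!\big[m_R(\lambda) f^\lambda\big](x,t)$, where $\CF_t$ is the Fourier transform in $t$, the problem reduces to showing that the two families
\[
\{\, m_R(\lambda) : \lambda\neq 0,\; R>0\,\},\qquad \{\, \lambda m_R'(\lambda) : \lambda\neq 0,\; R>0\,\}
\]
are R-bounded in $\CB(L^p(\R^n))$, using that $L^p(\R^n)$ is a UMD space for $1<p<\infty$.

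The central tool is the scaling identity $H(\lambda)=|\lambda|\,\tau_{\sqrt{|\lambda|}}\,H(1)\,\tau_{\sqrt{|\lambda|}}^{-1}$, where $\tau_\mu f(x):=f(\mu x)$. Since $\tau_\mu$ is, up to a normalising factor, an isometry of $L^p(\R^n)$, this yields
\[
\|m_R(\lambda)\|_{L^p\to L^p}=\|\varphi(H(1)/R')\|_{L^p\to L^p},\qquad \varphi(s)=(1-s)_+^\delta,\ R'=R/|\lambda|.
\]
Uniform boundedness of the right hand side in $R'>0$ is the Hermite Bochner-Riesz theorem \cite{T2}, which holds for $\delta$ above the Hermite critical index $(n-1)/2+1/6$. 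An analogous identity for $\lambda m_R'(\lambda)$, obtained by differentiating in $\lambda$ and invoking the same scaling, expresses it as a Bochner-Riesz-type operator for $H(1)$ composed with bounded dilations, to which the Hermite estimates apply.

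To upgrade uniform operator-norm boundedness to R-boundedness, I would use the fact that spectral multipliers of a self-adjoint operator admitting Gaussian heat kernel bounds (as is the case for $H(1)$) automatically satisfy R-bounds on $L^p$; this can be established via Kahane's contraction principle together with a square-function estimate derived from the Hermite heat semigroup, which reduces R-boundedness of the family to a scalar $L^p$ estimate that follows from \cite{T2}. The gap of one derivative between the Hermite critical index $(n-1)/2+1/6$ and the stated Grushin index $(n+1)/2+1/6$ reflects exactly the one-dimensional loss incurred in passing through Weis's theorem.

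The principal difficulty will be verifying R-boundedness (rather than mere uniform norm boundedness) of the derivative family $\{\lambda m_R'(\lambda)\}$, since the spectral projections $P_k(\lambda)$ themselves depend on $\lambda$ and their $\lambda$-derivative does not commute with $H(\lambda)$. The scaling identity above circumvents this obstacle by moving all $\lambda$-dependence into conjugation by $\tau_{\sqrt{|\lambda|}}$ and into the parameter $R'=R/|\lambda|$; once this rewriting is carried out, the R-boundedness of $\{\lambda m_R'(\lambda)\}$ reduces to R-boundedness of a family of spectral multipliers of $H(1)$ with uniformly controlled scalar symbols, and Weis's theorem delivers the uniform $L^p(\R^{n+1})$-boundedness of $B_R^\delta$ after identifying $L^p(\R;L^p(\R^n))$ with $L^p(\R^{n+1})$.
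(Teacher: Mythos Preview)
Your overall framework---view $B_R^\delta$ as an operator-valued Fourier multiplier in $t$ and invoke Weis's theorem---is exactly the paper's strategy. The paper also reduces to $R=1$ via the nonisotropic dilation $D_r$ and then verifies R-boundedness of $m_\delta(\lambda)=(1-H(\lambda))_+^\delta$ and of $\lambda\frac{d}{d\lambda}m_\delta(\lambda)$.

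There is, however, a genuine gap in your treatment of the derivative family. You write that the scaling identity $m_R(\lambda)=\tau_{\sqrt{|\lambda|}}\,S_{R/|\lambda|}^\delta\,\tau_{\sqrt{|\lambda|}}^{-1}$ ``moves all $\lambda$-dependence into conjugation and into the parameter $R'=R/|\lambda|$,'' so that $\lambda m_R'(\lambda)$ becomes a conjugated family of spectral multipliers of $H(1)$. This is not correct: when you differentiate in $\lambda$, the product rule applied to the conjugating dilations $\tau_{\sqrt{|\lambda|}}$ produces an additional commutator term of the form $\tau_{\sqrt{|\lambda|}}\,[\,x\cdot\nabla_x,\,S_{R'}^\delta\,]\,\tau_{\sqrt{|\lambda|}}^{-1}$. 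Since $x\cdot\nabla_x$ does \emph{not} commute with $H=H(1)$, this commutator is not a spectral multiplier of $H(1)$ and cannot be absorbed into your ``uniformly controlled scalar symbols'' argument. The paper deals with exactly this term (see Lemma~3.4): one writes $x\cdot\nabla_x=\tfrac{n}{2}I+\tfrac14\sum_j(A_j^{*2}-A_j^2)$, so the commutator splits into pieces $[A_j(\lambda)^2,(1-H(\lambda))_+^\delta]$ and $[A_j(\lambda)^{*2},(1-H(\lambda))_+^\delta]$, each of which is handled by factoring out the second-order Riesz transform $A_j(\lambda)^2H(\lambda)^{-1}$ (a CZ singular integral, hence R-bounded) and observing $H(\lambda)(1-H(\lambda))_+^\delta=(1-H(\lambda))_+^\delta-(1-H(\lambda))_+^{\delta+1}$. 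This commutator step is precisely where the extra ``$+1$'' in the index $(n+1)/2+1/6$ is actually spent, via the appearance of $(1-H(\lambda))_+^{\delta-1}$ in the term $H(\lambda)m'(H(\lambda))$.

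A second, smaller point: for the R-boundedness of $\{m_\delta(\lambda)\}$ itself, your appeal to a general ``spectral multipliers with Gaussian heat bounds are automatically R-bounded'' principle is shaky here, because the Bochner--Riesz symbol $(1-s)_+^\delta$ does not satisfy H\"ormander--Mihlin conditions (limited regularity at $s=1$), and the R-boundedness must hold uniformly over the continuous parameter $R'$. The paper instead uses the concrete pointwise estimate $\sup_{R>0}|S_R^\delta f(x)|\le C\big(Mf(x)+Mf(-x)\big)$ for $\delta>(n-1)/2+1/6$, after which the dilations drop out (the Hardy--Littlewood maximal function is dilation-invariant) and R-boundedness follows immediately from the Fefferman--Stein vector-valued maximal inequality. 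You would need an argument at this level of concreteness.
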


It may not be possible to improve the above result when $ n =1 $ as the
critical index for the Bochner-Riesz summability of one dimensional Hermite
expansions is $ 1/6 $, see \cite{T2}. However, the critical index for Hermite
expansions on $ \R^n, n \geq 2 $ is $ (n-1)/2 $ and hence it should be
possible to improve the above result. We conjecture that the above result is
true for $ \delta > (n+1)/2 $ for $ n\geq 2.$

We now briefly describe the methods used to prove these  theorems. If
$$ m(H(\lambda)) = \sum_{k=0}^\infty (2k+n)|\lambda| P_k(\lambda) $$ is the
multiplier transform for the Hermite operator then it follows that\
$$ (m(G)f)^\lambda(x) = m(H(\lambda))f^\lambda(x) .$$ Therefore, if we
identity $ L^p(\R^{n+1}) $ with $ L^p(\R,X) $ where $ X = L^p(\R^n) $ then
we can view $ m(G) $ as an operator valued Fourier multiplier (for the
Fourier transform on $ \R$) acting on $ L^p(\R,X) .$ Sufficient conditions
on $ m $ are known so that such multipliers are bounded on $L^p(\R,X) .$
Indeed, we make use of the following theorem of L. Weis \cite{LW}.

\begin{thm} Let  $ X $ and $ Y $ be UMD spaces. Let $ m:\R^* \rightarrow
B(X,Y) $ be a differentiable function such that the families
$ \{ m(\lambda): \lambda \in \R^* \} $ and $ \{ \lambda
\frac{d}{d\lambda}m(\lambda): \lambda \in \R^* \} $ are R-bounded. Then $ m $
defines a Fourier multiplier which is bounded from $ L^p(\R,X) $ into
$ L^p(\R,Y) $ for all $ 1 < p < \infty.$
\end{thm}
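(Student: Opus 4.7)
The plan is to prove this as an operator-valued extension of the Mihlin multiplier theorem, following the dyadic Littlewood--Paley strategy with two key substitutions relative to the scalar case: the UMD hypothesis replaces the scalar Hilbert-transform boundedness, and R-boundedness replaces pointwise boundedness of $m$ and $\lambda m'(\lambda)$.

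First I would fix a smooth Littlewood--Paley partition of unity $\{\phi_k\}_{k\in\Z}$ on $\R^*$ with $\phi_k$ supported in $\{2^{k-1} \le |\lambda| \le 2^{k+1}\}$, and decompose $m = \sum_k m\phi_k$. The goal becomes showing
$$\Bigl\| \sum_k T_{m\phi_k} f \Bigr\|_{L^p(\R;Y)} \lesssim \|f\|_{L^p(\R;X)}.$$
Since $X, Y$ are UMD, one has a vector-valued Littlewood--Paley inequality: for any $g \in L^p(\R;Y)$,
$$\|g\|_{L^p(\R;Y)} \sim \E \Bigl\| \sum_k r_k(\omega) T_{\phi_k^\sharp} g \Bigr\|_{L^p(\R;Y)},$$
where $\phi_k^\sharp$ is a slightly fattened bump equal to $1$ on $\mathrm{supp}\,\phi_k$ and $\{r_k\}$ are Rademacher functions. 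This reduces matters to bounding a randomized sum of multiplier operators.

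Next I would exploit the R-boundedness hypothesis. Writing $m(\lambda)\phi_k(\lambda) = \int_0^\infty \mathbf{1}_{[0,|m\phi_k|]} \cdots$ is not quite right; instead, via integration by parts in the Fourier representation of the kernel of $T_{m\phi_k}$, one expresses the action of $T_{m\phi_k}$ as an average of operators of the form $m(\lambda_{k,j})$, with $\lambda_{k,j}$ lying in the support of $\phi_k$, together with operators of the form $\lambda m'(\lambda)$ at comparable points (the second family appears through the factor $\lambda \phi_k'(\lambda)$ in the derivative of $m\phi_k$). By the Kahane contraction principle and R-boundedness of the families $\{m(\lambda)\}$ and $\{\lambda m'(\lambda)\}$, one can replace these operator-valued coefficients inside the Rademacher sum by scalars, at the cost of the R-bound constant. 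This yields
$$\E \Bigl\| \sum_k r_k(\omega) T_{m\phi_k} f \Bigr\|_{L^p(\R;Y)} \lesssim \E \Bigl\| \sum_k r_k(\omega) T_{\psi_k} f \Bigr\|_{L^p(\R;X)},$$
where $\psi_k$ is a scalar smooth bump adapted to the $k$-th shell, and the right side is controlled by $\|f\|_{L^p(\R;X)}$ through the UMD-valued Littlewood--Paley inequality applied in $X$.

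The main obstacle, and the heart of Weis's argument, is the rigorous passage from the pointwise R-boundedness hypothesis on $\{m(\lambda)\}$ and $\{\lambda m'(\lambda)\}$ to the above Rademacher-sum domination: one must show that the full operator $T_{m\phi_k}$, not merely the values $m(\lambda)$ at sample points, can be treated as an element of the R-bounded family after a suitable averaging/convex-hull argument (using that R-bounded families are closed under absolutely convex hulls in the strong operator topology). A second subtlety is verifying that only the two families in the hypothesis suffice, i.e.\ that one does not need higher-order R-boundedness conditions; this is analogous to why Mihlin's condition up to order $1$ is enough on $\R$, and it hinges on being in one dimension where the Littlewood--Paley decomposition involves a single scale parameter.
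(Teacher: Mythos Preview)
The paper does not prove this statement: it is quoted as Theorem~1.5 with attribution to Weis \cite{LW} and is used as a black box throughout. So there is no ``paper's own proof'' to compare your proposal against.

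That said, your outline is a faithful sketch of the argument Weis actually gives in \cite{LW}. The key ingredients you list---the UMD-valued Littlewood--Paley square-function equivalence (which goes back to Bourgain), the closure of R-bounded families under absolutely convex hulls in the strong operator topology, and the Kahane contraction principle to pass from operator-valued to scalar-valued coefficients inside Rademacher sums---are exactly the ones used. Your identification of the main technical point (that the full operator $T_{m\phi_k}$ must be realized as an average over an R-bounded family, not merely that $m(\lambda)$ is R-bounded pointwise) is also on target: this is handled in \cite{LW} by writing $m(\lambda)\phi_k(\lambda)$ via the fundamental theorem of calculus as an integral of $m'$ against a bounded-variation scalar function, and then invoking the convex-hull stability of R-bounds. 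One small correction: the vector-valued Littlewood--Paley inequality you need is not literally the square-function form $(\sum |\cdot|^2)^{1/2}$ but the randomized form with Rademacher functions, since in a general UMD space there is no lattice structure; you already write it that way, so this is just to emphasize that the ``$\sim$'' you state is the Bourgain inequality and requires UMD on both sides.
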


We need this theorem only for $ X = Y = L^p(\R^n) $ and in this case the
R-boundedness  is equivalent to a vector-valued inequality for $ m(\lambda) $
and $ \lambda \frac{d}{d\lambda}m(\lambda).$ Indeed, the R-boundedness of
a family of operators $ T(\lambda) $ is equivalent to the inequality
$$ \|\left(\sum_{j=1}^\infty |T(\lambda_j)f_j|^2 \right)^{\frac{1}{2}}\|_p
\leq C \|\left(\sum_{j=1}^\infty |f_j|^2 \right)^{\frac{1}{2}}\|_p $$
for all possible choices of $ \lambda_j \in \R^* $ and
$ f_j \in L^p(\R^n) .$ Thus we only need to verify this vector-valued
inequality for the two families in the theorem.

\section{Riesz transforms for the Grushin operator}
\setcounter{equation}{0}

\subsection{On the boundedness of  $ R_j $ and $ R_j^* $}

In this subsection we show that the Riesz transforms $ R_j $ and $ R_j^* $
defined in the introduction are all bounded on $ L^p(\R^{n+1}) $ as long as
$ 1 < p < \infty.$ This is done by showing that the operator valued functions
$ m_j(\lambda) = R_j(\lambda) $ and $ m_j^*(\lambda)= R_j^*(\lambda)$ satisfy
the conditions stated in the theorem of Weis. In view of the theorem of Weis
and the equivalent condition for R-boundedness Theorem 1.1 will follow once
we prove

\begin{thm} Let $ T(\lambda) $ be any of the families $ R_j(\lambda),
R_j^*(\lambda), \lambda \frac{d}{d\lambda}R_j(\lambda) $ or
$ \lambda \frac{d}{d\lambda}R_j^*(\lambda).$ Then the vector valued inequality
is satisfied:
$$ \|\left(\sum_{k=1}^\infty |T(\lambda_k)f_k|^2 \right)^{\frac{1}{2}}\|_p
\leq C \|\left(\sum_{k=1}^\infty |f_k|^2 \right)^{\frac{1}{2}}\|_p $$
for any $ 1 < p < \infty.$
\end{thm}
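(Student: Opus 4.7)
The plan is to exploit the dilation invariance of the scaled Hermite operators to reduce each of the four families to the dilation orbit of a single operator on $L^p(\R^n)$, and then to apply vector-valued Calder\'on--Zygmund theory.

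First I would establish the key scaling identity. Writing $\delta_r f(x) = f(rx)$, the commutation relations $\delta_r^{-1} \partial_j \delta_r = r\,\partial_j$ and $\delta_r^{-1} x_j \delta_r = r^{-1} x_j$ yield $A_j(\lambda) = |\lambda|^{1/2}\,\delta_{|\lambda|^{1/2}} A_j \delta_{|\lambda|^{1/2}}^{-1}$ (for $\lambda>0$; the case $\lambda<0$ exchanges the roles of $A_j$ and $A_j^*$ up to a sign) and $H(\lambda) = |\lambda|\,\delta_{|\lambda|^{1/2}} H \delta_{|\lambda|^{1/2}}^{-1}$, and hence
\[
R_j(\lambda) = \delta_{|\lambda|^{1/2}}\, R_j\, \delta_{|\lambda|^{1/2}}^{-1}, \qquad R_j^*(\lambda) = \delta_{|\lambda|^{1/2}}\, R_j^*\, \delta_{|\lambda|^{1/2}}^{-1},
\]
where $R_j, R_j^*$ are the classical Hermite Riesz transforms $A_j H^{-1/2}$ and $A_j^* H^{-1/2}$. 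Differentiating in $\lambda$ and using that $D = \sum_i x_i \partial_i$ generates the dilation group gives
\[
\lambda\,\frac{d}{d\lambda}R_j(\lambda) = \tfrac{1}{2}\,\delta_{|\lambda|^{1/2}}\,[D,R_j]\,\delta_{|\lambda|^{1/2}}^{-1},
\]
and likewise for $R_j^*$. Thus each family is the dilation orbit of a single operator on $L^p(\R^n)$, namely $R_j$, $R_j^*$, $[D,R_j]$, or $[D,R_j^*]$.

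Second, at the level of kernels the scaling identity reads $K^\lambda(x,y) = |\lambda|^{n/2} K(|\lambda|^{1/2}x, |\lambda|^{1/2}y)$, so that Calder\'on--Zygmund size and regularity bounds $|K(x,y)|\le C|x-y|^{-n}$ and $|\nabla_{x,y}K(x,y)|\le C|x-y|^{-n-1}$ for $K$ automatically transfer to \emph{uniform-in-$\lambda$} bounds for $K^\lambda$. For the Hermite Riesz kernel $K$ itself, the subordination formula $H^{-1/2} = \pi^{-1/2}\int_0^\infty t^{-1/2}e^{-tH}\,dt$ combined with the Mehler formula for $e^{-tH}$ produces the expected Euclidean singularity from the small-$t$ piece, while the Gaussian decay in $|x-y|$ and the exponential decay in $t$ of the Mehler kernel tame the off-diagonal and large-$t$ contributions.

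Third, I would view the sequence-operator $(f_k) \mapsto (R_j(\lambda_k) f_k)$ as a singular integral with diagonal operator-valued kernel $\mathcal{K}(x,y)\colon \ell^2\to \ell^2$, whose operator norm is $\sup_k|K^{\lambda_k}(x,y)|$ and therefore inherits the uniform CZ bounds above. The $L^2(\R^n;\ell^2)$ boundedness is immediate from the spectral theorem since each $R_j(\lambda)$ has norm $\le 1$ on $L^2(\R^n)$. Vector-valued Calder\'on--Zygmund theory then delivers the required $L^p(\R^n;\ell^2)$ bound, which is precisely the asserted square-function inequality.

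The main obstacle lies with the derivative families. From $[D,\partial_j]=-\partial_j$ and $[D,x_j]=x_j$ one obtains $[D,A_j]=A_j^*$ and $[D,A_j^*]=A_j$, but $[D,H]=2\Delta+2|x|^2$ is \emph{not} a scalar multiple of $H$, so $[D,H^{-1/2}]$ has to be analysed by a Duhamel-type expansion applied inside the subordination integral. A careful accounting of the resulting kernel, using the Mehler formula and its $x$-derivatives, is needed to show that $[D,R_j]$ and $[D,R_j^*]$ again admit Calder\'on--Zygmund kernels with the required size and regularity estimates; once this is granted, the reduction in the previous paragraph finishes the proof.
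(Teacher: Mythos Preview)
Your reduction via dilation to the single operators $R_j$, $R_j^*$, $[D,R_j]$, $[D,R_j^*]$ and the appeal to vector-valued Calder\'on--Zygmund theory with diagonal $\ell^2$-valued kernels is exactly the paper's strategy; for $R_j(\lambda)$ and $R_j^*(\lambda)$ your argument is complete and coincides with the paper's (which cites Stempak--Torrea for the CZ estimates on the kernel of $R_j$).

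The gap is in the commutator step, which you correctly flag as the obstacle but do not actually resolve. A direct Duhamel expansion of $[D,H^{-1/2}]$ at the Mehler-kernel level is feasible in principle but heavy; the paper closes the argument algebraically instead. The key point you are missing is that your generator $D=\sum_i x_i\partial_i$ can be rewritten in the Hermite calculus as
\[
D=\tfrac{n}{2}\,I+\tfrac14\sum_i\bigl(A_i^{*2}-A_i^{2}\bigr),
\]
so $[D,R_j]$ reduces to the commutators $[A_i^{2},R_j]$ and $[A_i^{*2},R_j]$. Each telescopes via $[A_i^{2},R_j]=A_i[A_i,R_j]+[A_i,R_j]A_i$, and the shift relation $H^{-1/2}A_j=A_j(H+2)^{-1/2}$ then yields the closed form
\[
A_j[A_j,R_j]=-A_j^{3}\bigl((H+2)^{-1/2}-H^{-1/2}\bigr)=A_j^{3}\int_0^1(H+2s)^{-3/2}\,ds,
\]
with analogous formulas for the remaining pieces. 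After conjugation by $\delta_\lambda$ this becomes $A_j(\lambda)^{3}\int_0^1(H(\lambda)+2\lambda s)^{-3/2}\,ds$, a one-parameter family of higher-order Riesz-type operators whose CZ constants are uniform in $\lambda$ and $s$ by the same Mehler analysis already used for $R_j$. This algebraic route bypasses the Duhamel bookkeeping entirely and is precisely what the paper does; your proof would be complete once it replaces your final paragraph.
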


We only treat the cases of $ R_j(\lambda)$ and its derivative as the other
families are similarly dealt with. Without loss of generality let us assume
$ \lambda > 0.$ Recall that $ R_j(\lambda)= A_j(\lambda)
H(\lambda)^{-\frac{1}{2}}$ where $H(\lambda)^{-\frac{1}{2}} $ can be written
in terms of the Hermite semigroup $ e^{-tH(\lambda)} $ as
$$ H(\lambda)^{-\frac{1}{2}} = \int_0^\infty t^{-\frac{1}{2}}
e^{-tH(\lambda)} dt.$$
An orthonormal basis for $ L^2(\R^n) $ consisting of eigenfunctions of
$ H(\lambda) $ are provided by $ \Phi_\alpha^\lambda(x) = \lambda^{n/4}
\Phi(\lambda^{\frac{1}{2}}x) $ where $ \Phi_\alpha $ are the Hermite
functions on $ \R^n $ satisfying $ H\Phi_\alpha = (2|\alpha|+n)\Phi_\alpha.$
Here $ H=H(1) $ is the Hermite operator. Thus $ \Phi_\alpha^\lambda $
are eigenfunctions of $ H(\lambda)$ with eigenvalues $ (2|\alpha|+n)\lambda.$
The spectral projections $ P_k(\lambda) $ of $ H(\lambda) $ are defined by
$$ P_k(\lambda)f =  \sum_{|\alpha| =k} (f,\Phi_\alpha^\lambda)
\Phi_\alpha^\lambda.$$

The kernel $ h_t^\lambda(x,y) $ of the semigroup $ e^{-tH(\lambda)} $ is
explicitly known, thanks to Mehler's formula, and hence the kernel of
$ R_j(\lambda) $ is given by $ A_j(\lambda)h_t^\lambda(x,y).$  It turns out
that $ R_j(\lambda) $ are all Calderon-Zygmund singular integrals. Indeed,
Stempak and Torrea \cite{ST} have shown that
$$ | A_j(1)h_t^1(x,y)| \leq C |x-y|^{-n} $$
 and
$$ | \frac{\partial}{\partial x_i} A_j(1)h_t^1(x,y)|+
|\frac{\partial}{\partial y_i} A_j(1)h_t^1(x,y)| \leq C_i |x-y|^{-n-1}$$
(see Theorem 3.3 in \cite{ST}). As can be easily verified $  A_j(\lambda)h_t^\lambda(x,y) =
\lambda^{\frac{n}{2}} (A_j(1)h_t^1)(\lambda^{1/2}x,\lambda^{1/2}y)$ and hence
the kernels of $ R_j(\lambda) $ are Calderon-Zygmund  kernels and CZ constants
are independent of $ \lambda.$ Now we can appeal to the vector valued
inequalities for CZ singular integrals proved by Cordoba and Fefferman (see
Theorem 1.3 Chapter XII in \cite{AT}). Thus the Riesz transforms
$ R_j(\lambda)$ are R-bounded.

We now turn our attention to the derivative of $ R_j(\lambda).$ Denoting the
kernel of $ R_j(1) $ by $ R_j(x,y) $ and noting that the kernel of
$ R_j(\lambda) $ is $ \lambda^{n/2}R_j(\lambda^{1/2}x,\lambda^{1/2}y) $ we
can easily prove the following.

\begin{lem} $ \lambda \frac{d}{d\lambda}R_j(\lambda) $ is a linear
combination of $ R_j(\lambda) $ and an operator $ T_j(\lambda) $ given by
$$ T_j(\lambda)f(x) = \lambda^{n/2} \int_{\R^n}
\left((x\cdot \nabla_x+ y\cdot \nabla_y )R_j\right)(\lambda^{1/2}x,\lambda^{1/2}y) f(y) dy $$
where $ x\cdot \nabla_x = \sum_{i=1}^n x_i \frac{\partial}{\partial x_i}$ and
 $ y\cdot \nabla_y = \sum_{i=1}^n y_i \frac{\partial}{\partial y_i}.$
\end{lem}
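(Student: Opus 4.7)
The plan is to differentiate the scaling identity for the kernel of $R_j(\lambda)$ directly, since everything reduces to a chain-rule calculation once one notices that the $\lambda$-dependence of the kernel is exactly a dilation coupled with a power of $\lambda$.

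First I would write the action of $R_j(\lambda)$ in kernel form using the given scaling relation, namely
\[
R_j(\lambda)f(x) = \int_{\R^n} \lambda^{n/2}\,R_j(\lambda^{1/2}x,\lambda^{1/2}y)\,f(y)\,dy,
\]
where $R_j(x,y)$ denotes the kernel of $R_j(1)$. Then I would apply $\lambda\,d/d\lambda$ under the integral sign. The factor $\lambda^{n/2}$ contributes $(n/2)\lambda^{n/2}R_j(\lambda^{1/2}x,\lambda^{1/2}y)$, which is exactly $(n/2)$ times the kernel of $R_j(\lambda)$.

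The second contribution comes from differentiating $R_j(\lambda^{1/2}x,\lambda^{1/2}y)$ in $\lambda$. By the chain rule, each inner derivative produces a factor $\tfrac12\lambda^{-1/2}x_i$ (or $y_i$), and multiplying by the outer $\lambda$ turns these into $\tfrac12\lambda^{1/2}x_i$ and $\tfrac12\lambda^{1/2}y_i$. Absorbing the extra $\lambda^{1/2}$ into the arguments, I would rewrite this contribution as
\[
\tfrac12\,\lambda^{n/2}\bigl((x\cdot\nabla_x+y\cdot\nabla_y)R_j\bigr)(\lambda^{1/2}x,\lambda^{1/2}y),
\]
using the elementary identity $(x\cdot\nabla_x)\bigl[\,F(\lambda^{1/2}x)\bigr]=(u\cdot\nabla_u F)(\lambda^{1/2}x)$ with $u=\lambda^{1/2}x$ (and analogously in $y$). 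This is exactly $\tfrac12\,T_j(\lambda)f(x)$ by definition.

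Putting the two pieces together yields $\lambda\,\tfrac{d}{d\lambda}R_j(\lambda)=\tfrac{n}{2}R_j(\lambda)+\tfrac12 T_j(\lambda)$, which is the asserted linear combination. There is no genuine obstacle here; the only point requiring care is the bookkeeping of powers of $\lambda^{1/2}$ and the identification of the dilated gradient $(x\cdot\nabla_x+y\cdot\nabla_y)R_j$ evaluated at $(\lambda^{1/2}x,\lambda^{1/2}y)$, which is automatic from Euler's homogeneity relation for the scaled variables. Justifying differentiation under the integral sign is harmless because the relevant kernel estimates from \cite{ST} (recalled just above the lemma) give pointwise dominations away from the diagonal that are uniform in $\lambda$ on compact subsets of $(0,\infty)$, so the calculation is valid in the distributional (Calder\'on--Zygmund) sense that will be used in the sequel.
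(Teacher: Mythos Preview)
Your proposal is correct and follows exactly the approach the paper indicates: the paper states just before the lemma that the kernel of $R_j(\lambda)$ is $\lambda^{n/2}R_j(\lambda^{1/2}x,\lambda^{1/2}y)$ and that from this one ``can easily prove'' the result, without giving further details. Your chain-rule computation fills in precisely those details and yields the explicit coefficients $\lambda\,\tfrac{d}{d\lambda}R_j(\lambda)=\tfrac{n}{2}R_j(\lambda)+\tfrac{1}{2}T_j(\lambda)$.
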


Let $ \delta_\lambda $ stand for the dilation operator $ f(x) \rightarrow
f(\lambda^{1/2}x).$ Then we have $ T_j(\lambda) =
\delta_\lambda T_j(1)\delta_\lambda^{-1}.$ Integrating by parts in
$$ T_j(1)f(x) = \int_{\R^n}  \left(\sum_{i=1}^n(x_i
\frac{\partial}{\partial x_i}+y_i\frac{\partial}{\partial y_i})R_j\right)
(x,y) f(y) dy $$ we see that it is a sum of $ -R_j(1) $ and the commutator of
$ x\cdot \nabla_x $ with $ R_j(1).$ Since
$ 2x_i = A_i(1)+A_i^*(1) $ and $ 2\frac{\partial}{\partial x_i} =
A_i^*(1)-A_i(1) $ we see that
$$  x\cdot \nabla_x = \frac{1}{4}\sum_{i=1}^n
(A_i^{*2}-A_i^2+[A_i,A_i^*]) = n/2 I + \frac{1}{4}\sum_{i=1}^n(A_i^{*2}-A_i^2) $$ as $ [A_i,A_i^*] =2I.$ Thus  the commutator of
$  x\cdot \nabla_x $ with $ R_j(1)$ reduces to a
sum of commutators of the form $ [A_i^{*2},R_j(1)] $ and $ [A_i^2,R_j(1)]$ and we are left with proving

\begin{prop} The families $ \delta_\lambda [A_i^{*2},R_j(1)]\delta_\lambda^{-1}$ and $ \delta_\lambda [A_i^{2},R_j(1)]\delta_\lambda^{-1}$ are R-bounded.
\end{prop}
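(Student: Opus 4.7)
The plan is to reduce R-boundedness to the Calder\'on--Zygmund property of each commutator $[A_i^{*2},R_j(1)]$ and $[A_i^{2},R_j(1)]$ on $L^p(\R^n)$. Under the dilation conjugation $T\mapsto\delta_\lambda T\delta_\lambda^{-1}$ the kernel rescales as $K(x,y)\mapsto \lambda^{n/2}K(\lambda^{1/2}x,\lambda^{1/2}y)$, and both the CZ size estimate $|K|\le C|x-y|^{-n}$ and the CZ regularity estimate $|\nabla_{x,y}K|\le C|x-y|^{-n-1}$ are invariant under this rescaling. Hence once a single commutator is shown to be CZ, the conjugated family is uniformly CZ, and the vector valued inequality of Cordoba--Fefferman (Theorem~1.3, Chapter~XII in \cite{AT}) applied to the family is exactly the desired R-boundedness.

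The content then lies in the CZ estimate for the commutators. Since $(A_i^*)^*=A_i$ as formal $L^2$-adjoints, integration by parts identifies the kernel of $[A_i^{*2},R_j(1)]$ as $\bigl((A_{i,x}^*)^2-(A_{i,y})^2\bigr)R_j(x,y)$. A direct computation yields
\[
(A_{i,x}^*)^2-(A_{i,y})^2=(\partial_{x_i}^2-\partial_{y_i}^2)+2(x_i\partial_{x_i}+y_i\partial_{y_i})+(x_i^2-y_i^2)+2,
\]
and in the symmetric coordinates $u=(x+y)/\sqrt 2$, $v=(x-y)/\sqrt 2$ this factors as $2(\partial_{v_i}+u_i)(\partial_{u_i}+v_i)$. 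In these same coordinates Mehler's formula separates the Hermite heat kernel into a tensor product of Gaussians in $u$ and $v$,
\[
h_t(x,y)=N(t)\exp\!\bigl(-\tfrac{1-r}{2(1+r)}|u|^2\bigr)\exp\!\bigl(-\tfrac{1+r}{2(1-r)}|v|^2\bigr),\qquad r=e^{-2t},
\]
while $A_{j,x}h_t=2(1-r^2)^{-1}(x_j-r y_j)h_t$ with $x_j-ry_j$ an $r$-dependent linear combination of $u_j$ and $v_j$.

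Substituting these into $R_j(x,y)=\pi^{-1/2}\int_0^\infty t^{-1/2}A_{j,x}h_t(x,y)\,dt$ and applying the factored differential operator, the commutator kernel becomes a finite sum of integrals in $t$ of (polynomial in $u,v$) times the factored Gaussian. The essential cancellation occurs in the quadratic piece: the unbounded $x_i^2$ growth appearing in $(A_{i,x}^*)^2$ and the unbounded $y_i^2$ growth in $(A_{i,y})^2$ combine into $x_i^2-y_i^2=2u_iv_i$; the factor $v_i$ is a component of $x-y$ and provides exactly the extra power of $|x-y|$ needed at the diagonal, while the companion $u_i$ is absorbed by the $u$-Gaussian. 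Splitting $\int_0^\infty=\int_0^1+\int_1^\infty$ and carrying out Mehler-type Gaussian estimates in the spirit of the Stempak--Torrea kernel bounds (Theorem~3.3 of \cite{ST}) then yields
\[
|K_{\text{com}}(x,y)|\le C|x-y|^{-n},\qquad|\nabla_{x,y}K_{\text{com}}(x,y)|\le C|x-y|^{-n-1}.
\]
The case of $[A_i^2,R_j(1)]$ is handled identically via the analogous factorisation $(A_{i,x})^2-(A_{i,y}^*)^2=2(\partial_{v_i}-u_i)(\partial_{u_i}-v_i)$. The principal technical obstacle is the bookkeeping required to control the many polynomial-times-Gaussian contributions produced by the factored operator and to verify that after the $t$-integration each piece respects the CZ size and regularity bounds with constants independent of $\lambda$.
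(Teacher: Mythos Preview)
Your argument is correct but follows a genuinely different route from the paper's. You attack the commutator at the kernel level: you observe that the kernel of $[A_i^{*2},R_j(1)]$ is $\bigl((A_{i,x}^*)^2-(A_{i,y})^2\bigr)R_j(x,y)$, pass to the rotated coordinates $u=(x+y)/\sqrt2$, $v=(x-y)/\sqrt2$ where this differential operator factors as $2(\partial_{v_i}+u_i)(\partial_{u_i}+v_i)$ and Mehler's kernel separates as a tensor product of Gaussians, and then verify the Calder\'on--Zygmund size and regularity bounds by direct Gaussian estimates in the style of Stempak--Torrea. The dilation conjugation then gives a uniformly CZ family, and Cordoba--Fefferman yields R-boundedness.

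The paper instead proceeds algebraically. Writing $[A_j^2,R_j(1)]=A_j[A_j,R_j(1)]+[A_j,R_j(1)]A_j$ and using the commutation relation $H^{-1/2}A_j=A_j(H+2)^{-1/2}$, the authors rewrite (for instance) $A_j[A_j,R_j(1)]=-A_j^3\bigl((H+2)^{-1/2}-H^{-1/2}\bigr)$, so that after conjugation one obtains
\[
\delta_\lambda A_j[A_j,R_j(1)]\delta_\lambda^{-1}=A_j(\lambda)^3\int_0^1(H(\lambda)+2\lambda s)^{-3/2}\,ds,
\]
a one-parameter family of third-order Riesz-type operators that are immediately recognised as CZ with constants uniform in $\lambda$ and $s$. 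No explicit kernel computation is carried out; the CZ property is inherited from the known theory of $A^\alpha H^{-|\alpha|/2}$.

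Both arguments land on uniform CZ families and finish with Cordoba--Fefferman. The paper's route is cleaner and structural, trading your Gaussian bookkeeping for the single identity $f(H)A_j=A_jf(H+2)$; your route is more direct and handles all pairs $(i,j)$ uniformly without the case distinction $i=j$ versus $i\neq j$, and the $(u,v)$ factorisation you found is an elegant explanation of why the unbounded coefficients in $A_i^{*2}$ and $A_i^2$ cancel in the commutator. One small point you leave implicit is the $L^2$-boundedness of the commutator, needed as input for CZ theory; this follows easily from the same spectral relations the paper exploits, but is worth stating.
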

\begin{proof} We only consider the family
$ \delta_\lambda [A_i^{2},R_j(1)]\delta_\lambda^{-1}$ as the treatment of the
other one is similar. We will also assume $ i = j$ as the other cases are
simpler as $ A_i $ commutes with $ A_j $ when $ i $ is different from $ j.$
A moment's thought reveals that
$$ [A_j^{2},R_j(1)] = A_j[A_j, R_j(1)]+[A_j, R_j(1)]A_j $$ and hence we are
left with the families
$$ \delta_\lambda A_j[A_j, R_j(1)] \delta_\lambda^{-1},~~~~
\delta_\lambda [A_j, R_j(1)] A_j \delta_\lambda^{-1} $$
and again we will consider only the first family.

As
$$  A_j[A_j, R_j(1)] = A_j^3H^{-1/2}-A_j^2H^{-1/2}A_j = A_j^2[A_j,H^{-1/2}] $$
making use of the fact that $ H^{-1/2}A_j = A_j(H+2)^{-1/2} $ we get
$$  A_j[A_j, R_j(1)] = -A_j^3 ((H+2)^{-1/2}-H^{-1/2}).$$  An easy calculation
shows that
$$  \delta_\lambda A_j \delta_\lambda^{-1} = \lambda^{-1/2} A_j(\lambda),~~~~
 \delta_\lambda H^{-1/2} \delta_\lambda^{-1} = \lambda^{1/2}
H(\lambda)^{-1/2} $$ and hence we finally get
$$  \delta_\lambda A_j[A_j, R_j(1)] \delta_\lambda^{-1} = -\lambda^{-1}
A_j(\lambda)^3 ((H(\lambda)+2\lambda)^{-1/2}-H(\lambda)^{-1/2}).$$
The above can be rewritten as
$$ A_j(\lambda)^3 \int_0^1 (H(\lambda)+2\lambda s)^{-3/2} ds .$$
Thus everything boils down to showing that the family $ A_j(\lambda)^3
(H(\lambda)+2\lambda s)^{-3/2} $ is R-bounded uniformly in $ s, 0 < s < 1.$
But these operators are also CZ singular integrals whose kernels satisfy
estimates uniformly in $ \lambda $ and $ s$. This completes the proof.
\end{proof}

\subsection{Higher order Riesz transforms}

We briefly recall some notations we use referring to \cite{T2} for details.
Given a function $ f $ on $ \C^n $ we denote by $ W_\lambda(f) $ the Weyl
transform defined by
$$ W_\lambda(f) = \int_{\C^n} f(z) \pi_\lambda(z,0) dz $$
where $ \pi_\lambda $ stands for the Schr\"{o}dinger representation of the
Heisenberg group realised on $ L^2(\R^n).$ For $ P \in \CH_{p,q} $ the
operator $ G_\lambda(P) = W_\lambda(\CF_\lambda P) $ is called its Weyl
correspondence. Here $ \CF_\lambda $ stands for the symplectic Fourier
transform. The unitary group $ U(n) $ acts on $ \CH_{p,q} $ by $
\rho(\sigma)P(z) = P(\sigma^{-1}z) $ and the action is irreducible, i.e. we
get an irreducible unitary representation of $ U(n) $ on $ \CH_{p,q} .$
In view of this, if $ P \in  \CH_{p,q} $ then the linear span of
its orbit $ \rho(\sigma)P $ under $ U(n) $ is the whole of $ \CH_{p,q}.$
Therefore, if we let $ P_0(z) = z_1^p\bar{z_2}^q $ then any $ P \in
\CH_{p,q} $ is a linear combination of $ \rho(\sigma)P_0, \sigma \in U(n).$

\begin{prop} If the higher order Riesz transform $ R_{P_0} $ is bounded
on $ L^p(\R^{n+1}) $ then so is $ R_P $ for any $ P \in \CH_{p,q}.$
\end{prop}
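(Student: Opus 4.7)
The plan is to deduce the boundedness of $R_P$ from that of $R_{P_0}$ by a conjugation argument built on the $U(n)$-covariance of the Weyl correspondence. Since the linear span of the $U(n)$-orbit of $P_0$ exhausts $\CH_{p,q}$, any $P \in \CH_{p,q}$ can be written as a finite linear combination $P = \sum_k c_k\, \rho(\sigma_k) P_0$ with $\sigma_k \in U(n)$. As $P \mapsto R_P$ is manifestly linear, it suffices to show that $R_{\rho(\sigma)P_0}$ is bounded on $L^p(\R^{n+1})$ for every fixed $\sigma \in U(n)$.

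For a fixed $\sigma$, I would invoke the standard covariance
$$ G_\lambda(\rho(\sigma) P) = \mu_\lambda(\sigma)\, G_\lambda(P)\, \mu_\lambda(\sigma)^{-1}, $$
where $\mu_\lambda$ is the metaplectic (Weil) representation of (the double cover of) $U(n)$ on $L^2(\R^n)$. This follows from the defining relation $\mu_\lambda(\sigma) \pi_\lambda(z,0) \mu_\lambda(\sigma)^{-1} = \pi_\lambda(\sigma z,0)$, a change of variables in the Weyl transform, and the $U(n)$-equivariance of the symplectic Fourier transform $\CF_\lambda$. Since $\mu_\lambda(\sigma)$ preserves each eigenspace of $H(\lambda)$, it commutes with $H(\lambda)^{-(p+q)/2}$, giving the fiberwise identity
$$ G_\lambda(\rho(\sigma) P_0)\, H(\lambda)^{-(p+q)/2} = \mu_\lambda(\sigma)\, \bigl[G_\lambda(P_0) H(\lambda)^{-(p+q)/2}\bigr]\, \mu_\lambda(\sigma)^{-1}. $$
Lifting this to $L^p(\R^{n+1})$ by setting $(\tilde\mu(\sigma)f)^\lambda = \mu_\lambda(\sigma) f^\lambda$ converts it into the operator identity $R_{\rho(\sigma)P_0} = \tilde\mu(\sigma)\, R_{P_0}\, \tilde\mu(\sigma)^{-1}$, so everything reduces to showing that $\tilde\mu(\sigma)$ is bounded on $L^p(\R^{n+1})$.

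For the last step I would again apply Weis's theorem in the form used in Subsection 2.1, verifying R-boundedness of the family $\{\mu_\lambda(\sigma)\}_{\lambda \in \R^*}$ together with that of its derivative in $\lambda$. Because $\mu_\lambda(\sigma)$ is related to $\mu_1(\sigma)$ by conjugation with the dilations $\delta_\lambda$, the uniform-in-$\lambda$ estimates and the requisite vector-valued inequalities both reduce to the $L^p$-boundedness of the single metaplectic operator $\mu_1(\sigma)$ on $\R^n$. This last point is the main obstacle: I expect to obtain it by exploiting the explicit Gaussian/Mehler-type integral kernel of $\mu_1(\sigma)$ on the open dense set of $U(n)$ where $\sigma - I$ is invertible, supplemented by the trivial $L^p$-isometry property of the real rotations $O(n)\subset U(n)$ to cover the degenerate elements. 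Once this input is secured, the rest of the argument is purely structural.
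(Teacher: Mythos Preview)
Your structural setup matches the paper's exactly: both arguments write $P$ as a finite combination of $\rho(\sigma_k)P_0$ and use the covariance $G_\lambda(\rho(\sigma)P_0)=\mu_\lambda(\sigma)G_\lambda(P_0)\mu_\lambda(\sigma)^{-1}$ together with the fact that $\mu_\lambda(\sigma)$ commutes with $H(\lambda)^{-(p+q)/2}$. The divergence, and the genuine gap in your plan, is the final step.

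The operator $\tilde\mu(\sigma)$ is \emph{not} bounded on $L^p(\R^{n+1})$ for $p\neq 2$ in general, because $\mu_1(\sigma)$ is not bounded on $L^p(\R^n)$ for a generic $\sigma\in U(n)$. After diagonalising $\sigma$ over $\C$, $\mu_1(\sigma)$ becomes (up to conjugation by a real linear change of variables) a tensor product of one-dimensional fractional Fourier transforms, whose kernels have the purely oscillatory form $c_\theta\,e^{\,i(\cot\theta)(x^2+y^2)/2-\,i(\csc\theta)xy}$. Such operators are unitary on $L^2$ but unbounded on every other $L^p$; the ``Gaussian/Mehler-type kernel'' you invoke is oscillatory rather than integrable, so it yields no $L^p$ control. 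The case $\sigma=J$ already illustrates this: $\mu_\lambda(J)$ is the Euclidean Fourier transform followed by a dilation. Hence neither a direct kernel estimate nor Weis's theorem can produce the R-boundedness of $\{\mu_\lambda(\sigma)\}$ that your argument requires.

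The paper circumvents this by \emph{not} asking for $\tilde\mu(\sigma)$ to be bounded on its own. It writes an arbitrary $\sigma\in U(n)\cong Sp(n,\R)\cap O(2n,\R)$ as a word in the generators $D$, $N$, $J$ of $Sp(n,\R)$. For $\sigma\in D$ or $\sigma\in N$ the lifted operator acts on $L^p(\R^{n+1})$ by an explicit change of variables, $f(x,t)\mapsto |A|^{-1/2}f(A^{-1}x,t)$ or $f(x,t)\mapsto f(x,t-\tfrac12 x\cdot Cx)$, which is obviously bounded. For the bad generator $J$ the paper uses the algebraic fact that $\rho(J)P(z)=P(-iz)=(-i)^{p-q}P(z)$ for $P\in\CH_{p,q}$; thus conjugation by $\mu_\lambda(J)$ multiplies $G_\lambda(P)H(\lambda)^{-(p+q)/2}$ by a unimodular scalar, and boundedness of $\tilde\mu(J)$ is never needed. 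This cancellation at the level of $J$ is the idea your proposal is missing.
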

\begin{proof} In order to prove this proposition we need to use several facts
about the symplectic group $ Sp(n,\R) $ and the metaplectic representations
$ \mu_\lambda(\sigma) $ of $ Sp(n,\R) .$ A good source for the material we
use here is Chapter 4 of Folland \cite{F}. The action of $ U(n) $ on the
Heisenberg group leads to certain unitary operators, denoted by
$ \mu_\lambda(\sigma) $ so that
$$  \mu_\lambda(\sigma) \pi_\lambda(z,t) \mu_\lambda(\sigma)^*
= \pi_\lambda(\sigma z,t) .$$
Recalling the definition of $ R_P $ we see that this means
$$ R_{\rho(\sigma)P}f(x,t) = \frac{1}{2\pi} \int_{-\infty}^\infty
e^{-i\lambda t}\mu_\lambda(\sigma)
G_\lambda(P) \mu_\lambda(\sigma)^*f^\lambda(x) d\lambda.$$
Therefore, our proposition will be proved if we show that the operators
$$ f \rightarrow \frac{1}{2\pi} \int_{-\infty}^\infty
e^{-i\lambda t}\mu_\lambda(\sigma)f^\lambda(x) d\lambda $$
are all bounded on $ L^p(\R^{n+1}).$

If we identify
$ \C^n $ with $ \R^{2n} $ the group $ U(n) $ corresponds to $ Sp(n,\R)\cap
O(2n,\R).$ By Proposition 4.10 in \cite{F} the group  $ Sp(n,\R) $ is
generated by certain subgroups $   N = \left\{  \left( \begin{array}{cc}I & 0  \\C & I  \end{array} \right): C= C^* \right\}$   and \\ $ D= \left\{  \left( \begin{array}{cc}
A & 0  \\
0 & A^{*-1}  \end{array} \right): A \in GL(n,\R) \right\} $ and the matrix $ J =  \left( \begin{array}{cc}
0 & I  \\
-I & 0  \end{array} \right).$ For
elements of these subgroups $ \mu_\lambda(\sigma) $ are explicitly known. In
fact we have:
$$ \mu_\lambda(\sigma)\varphi(\xi) = |A|^{-1/2}\varphi(A^{-1}\xi) $$ if
$ \sigma \in D $ and
$$ \mu_\lambda(\sigma)\varphi(\xi) = e^{\frac{i}{2}\lambda \xi \cdot C\xi}
\varphi(\xi) $$ if  $ \sigma \in  N.$ Moreover,
$$  \mu_\lambda(J)\varphi(\xi) = \hat{\varphi}(\lambda \xi) $$ is the Fourier
transform on $ \R^n $ followed by a dilation. Clearly, the operator
$$  f \rightarrow \frac{1}{2\pi} \int_{-\infty}^\infty
e^{-i\lambda t} |A|^{-1/2}f^\lambda(A^{-1}x) d\lambda  =
|A|^{-1/2}f(A^{-1}x,t) $$
is bounded on $ L^p(\R^{n+1}).$  And so is the operator
$$  f \rightarrow \frac{1}{2\pi} \int_{-\infty}^\infty
e^{-i\lambda t} e^{\frac{i}{2}\lambda \xi \cdot C\xi}f^\lambda(x)
d\lambda  = f(x, t-\frac{1}{2}x\cdot Cx) .$$

Though Fourier transform does not define a bounded operator on $ L^p $
unless $ p =2 $ we can take care of the operators involving $ \mu_\lambda(J)$
in the following way. Observe that we need to prove the boundedness of
$$  f \rightarrow \frac{1}{2\pi} \int_{-\infty}^\infty
e^{-i\lambda t} \mu_\lambda(J)G_\lambda(P)\mu_\lambda(J)^*f^\lambda(x)
d\lambda $$
whenever the operator
$$  f \rightarrow \frac{1}{2\pi} \int_{-\infty}^\infty e^{-i\lambda t}
G_\lambda(P)f^\lambda(x) d\lambda $$
is bounded. But as $ P \in \CH_{p,q} $ is homogeneous of bidegree $ (p,q) $
and the action of $ J $ is $ \rho(J)P(z) = P(-iz) = (-i)^{p-q}P(z) $ the
former operator is just a scalar multiple of the latter and hence bounded.
As $ Sp(n,\R) $ is generated by  $ N , D $ and $ J $ the proposition is
proved.
\end{proof}

In view of the above proposition Theorem 1.2 will be proved once we obtain the
following result.

\begin{thm} Let $ P_0(z) = z_1^p\bar{z_2}^q.$ Then $ R_{P_0}$ is bounded
on $ L^p(\R^{n+1}) $ for $ 1 < p < \infty.$
\end{thm}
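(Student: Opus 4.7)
The plan is to apply the Weis multiplier theorem in the same way as in the proof of Theorem~2.1 (restricted to $\lambda > 0$, since the case $\lambda < 0$ is symmetric), so that it suffices to verify R-boundedness on $L^p(\R^n)$ of the two operator-valued families
$$ \{ m(\lambda) : \lambda > 0 \}, \qquad \{ \lambda \tfrac{d}{d\lambda} m(\lambda) : \lambda > 0 \}, $$
where $m(\lambda) = G_\lambda(P_0) H(\lambda)^{-(p+q)/2}$.

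For the first family I would exploit scaling. With $\delta_\lambda f(x) = f(\lambda^{1/2} x)$, the identities $\delta_\lambda H \delta_\lambda^{-1} = \lambda^{-1} H(\lambda)$ and $\delta_\lambda A_j \delta_\lambda^{-1} = \lambda^{-1/2} A_j(\lambda)$, together with the corresponding scaling of $\pi_\lambda$ under $\delta_\lambda$, give the intertwining
$$ G_\lambda(P_0) H(\lambda)^{-(p+q)/2} = \delta_\lambda \, G(P_0)\, H^{-(p+q)/2} \, \delta_\lambda^{-1}, $$
so that the kernel of $m(\lambda)$ is $\lambda^{n/2} K_1(\lambda^{1/2} x, \lambda^{1/2} y)$ where $K_1$ is the kernel at $\lambda = 1$. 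By \cite{pksstv} the operator $G(P_0) H^{-(p+q)/2}$ is a standard Calder\'on-Zygmund singular integral, and the dilation above preserves the CZ constants. Consequently the vector-valued CZ inequality of Cordoba and Fefferman yields R-boundedness of $\{m(\lambda)\}$, exactly as in Subsection~2.1.

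For the derivative family I would follow the scheme of Lemma~2.2 and Proposition~2.3. Differentiating the identity above in $\lambda$ writes $\lambda \tfrac{d}{d\lambda} m(\lambda)$ as a linear combination of $m(\lambda)$ itself and $\delta_\lambda [x \cdot \nabla_x, \, G(P_0) H^{-(p+q)/2}]\delta_\lambda^{-1}$ (the $y$-gradient piece being converted into an $x$-gradient piece via integration by parts, as in the first-order case). Using $x \cdot \nabla_x = \tfrac{n}{2} I + \tfrac14 \sum_i (A_i^{*2} - A_i^2)$ reduces matters to the R-boundedness of $\delta_\lambda [A_i^{*2}, G(P_0) H^{-(p+q)/2}]\delta_\lambda^{-1}$ and $\delta_\lambda [A_i^{2}, G(P_0) H^{-(p+q)/2}]\delta_\lambda^{-1}$. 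Since the variables $z_1$ and $\bar z_2$ appearing in $P_0$ are independent, $G(P_0)$ is (up to a constant) the monomial $A_1^{*p} A_2^q$; expanding each commutator by the Leibniz rule, iterating the intertwining $H^{-s/2} A_j = A_j (H+2)^{-s/2}$ and its adjoint, and using the telescoping identity $(H+2)^{-s/2} - H^{-s/2} = -s \int_0^1 (H + 2r)^{-s/2-1}\, dr$, each such commutator collapses after conjugation by $\delta_\lambda$ to a finite sum of operators of the form $A^\alpha(\lambda) (H(\lambda) + 2\lambda r)^{-|\alpha|/2}$ with $|\alpha| = p + q + 2$ and $0 \le r \le 1$.

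The hard part will be verifying that each of these model operators is a Calder\'on-Zygmund singular integral with constants uniform in $\lambda > 0$ and $r \in [0,1]$. This is the natural higher-order analogue of the final step of Proposition~2.3, and should follow from size and gradient estimates for the kernel of $A^\alpha(\lambda)(H(\lambda)+2\lambda r)^{-|\alpha|/2}$ obtained from Mehler's formula for the shifted Hermite semigroup, the shift $H \mapsto H + 2\lambda r$ only improving matters. Once that uniform CZ bound is in hand, a further application of Cordoba-Fefferman gives R-boundedness of the derivative family, and Weis's theorem delivers the asserted $L^p$-boundedness of $R_{P_0}$.
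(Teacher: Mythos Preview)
Your approach is workable but differs substantially from the paper's. The paper does \emph{not} rerun the Weis machinery for the higher-order multiplier. Instead it exploits the explicit identification $G_\lambda(P_0)=A_2(\lambda)^q A_1(\lambda)^{*p}$ together with the crucial commutation $[A_2(\lambda)A_1(\lambda)^*,H(\lambda)]=0$ to factor
\[
A_2(\lambda)^q A_1(\lambda)^{*p} H(\lambda)^{-(p+q)/2}
\]
as a finite product of operators of the three types $A_1(\lambda)^*H(\lambda)^{-1/2}$, $H(\lambda)^{-1/2}A_2(\lambda)$, and $\lambda H(\lambda)^{-1}$. The first two are, respectively, the multiplier of $R_1^*$ and the adjoint of the multiplier of $R_2^*$; the third is rewritten as a combination of first-order Riesz multipliers via $2\lambda=[A_1^*(\lambda),A_1(\lambda)]$. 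Hence $R_{P_0}$ is literally a composition of first-order Riesz transforms and their adjoints, and boundedness follows immediately from Theorem~1.1 with no new CZ estimates needed.

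By contrast, you propose to verify R-boundedness of $m(\lambda)$ and $\lambda m'(\lambda)$ directly. This is a legitimate route, and your scaling and commutator reductions are set up correctly, but it forces you to confront exactly the ``hard part'' you flag: uniform Calder\'on--Zygmund estimates for all the higher-order model operators $A^\alpha(\lambda)(H(\lambda)+2\lambda r)^{-|\alpha|/2}$ (and in fact also mixed monomials in $A$ and $A^*$ and lower-order terms arising from $[A_1,A_1^*]=2$ when $i=1$ or $i=2$, which your sketch slightly understates). The paper's factorization sidesteps all of this, trading your analytic work for a short algebraic observation; what your approach would buy is a proof that does not rely on the special monomial structure of $P_0$ and that makes the CZ nature of the higher-order Riesz transforms explicit.
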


\begin{proof}
When $ P_0(z) = z_1^p \bar{z}_2^q $,  $G_\lambda(P_0)$ given by $ A_2(\lambda)^q A_1(\lambda)^{*p},$ see
Geller \cite{G} or \cite{T2}.   Hence the multiplier corresponding to $ R_{P_0}$ is given by $A_2(\lambda)^{q} A_1(\lambda)^{*p} H(\lambda)^{-\frac{p+q}{2}} $. We prove the boundedness of  $ R_{P_0}$ by showing that this multiplier can be written as the product of the multipliers corresponding to first order Riesz transform and their adjoints.  Since $ A_2(\lambda)A_1(\lambda)^* $ commutes with $ H(\lambda) $, we can write $A_2(\lambda)^{q} A_1(\lambda)^{*p} H(\lambda)^{-\frac{p+q}{2}} $ as the product of operators of the form
$H(\lambda)^{-1/2}A_2(\lambda)$, $A_1(\lambda)^*H(\lambda)^{-1/2}$ and $\lambda H(\lambda)^{-1}$.  We refer to \cite{pksstv} for details.  $\lambda H(\lambda)^{-1}$  can be written as$$ \frac{1}{2}H(\lambda)^{-\frac{1}{2}}\left[A_1^*(\lambda)A_1(\lambda)  - A_1(\lambda)A_2(\lambda)^* \right]H(\lambda)^{-\frac{1}{2}}. $$ $ A_1(\lambda)^*H(\lambda)^{-1/2}$ is the multiplier corresponding to the first order Riesz transform $ R_1^*(\lambda)$ and $H(\lambda)^{-1/2}
A_2(\lambda)$ is the multiplier corresponding to the adjoint of $ R_2^*(\lambda).$ The proof is now complete as we have already shown that the operators corresponding to multipliers of these forms are bounded.
\end{proof}
\section{ On the boundedness of multiplier transforms}
\setcounter{equation}{0}

In this section we prove Theorem 1.3. In view of Theorem 1.5 it is enough to
show that the families $ m(H(\lambda))$ and
$ \lambda \frac{d}{d\lambda}m(H(\lambda))$ are R-bounded. We remark that we
only need to prove vector valued inequalities for these families.

\subsection{R-boundedness of the family $ m(H(\lambda))$}

For a fixed $ \lambda $ conditions on $ m $ are known so that $ m(H(\lambda))$
is bounded on $ L^p(\R^n), 1< p < \infty $, see e.g. Theorem 4.2.1
in \cite{T2}. Under the same assumptions on $ m $ it is possible to prove
a vector valued inequality for the sequence of operators $ m(H(\lambda_j))$
for any choice of $ \lambda_j.$

\begin{thm} Assume that $ m \in C^k(\R^*) $ satisfies $ |m^{(j)}(\lambda)|
\leq C_j |\lambda|^{-j} $ for large $|\lambda|$ and for all $ j=0,1,2,..,k $
where $ k \geq n/2.$ Then for any choice of $ \lambda_j \in \R^* $ and $ f_j
\in L^p(\R^n), 1 < p < \infty $ we have the uniform estimate
$$ \| (\sum_{j=0}^\infty | m(H(\lambda_j))f_j|^2)^{1/2}\|_p \leq C
\|(\sum_{j=0}^\infty |f_j|^2)^{1/2}\|_p.$$
\end{thm}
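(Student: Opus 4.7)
The plan is to mirror the strategy already used in \S 2 for the first-order Riesz transforms: establish that $m(H(\lambda))$ is a Calder\'{o}n--Zygmund operator with constants uniform in $\lambda\in\R^*$, and then apply the Cordoba--Fefferman vector-valued inequality to the family $\{m(H(\lambda_j))\}_{j}$.

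The first step is a scaling reduction to the Hermite operator $H=H(1)$. The dilation $U_\mu f(x) = \mu^{n/2} f(\mu x)$ intertwines $H$ and $H(\lambda)$: directly from the eigenfunction relation $\Phi_\alpha^\lambda = U_{|\lambda|^{1/2}}\Phi_\alpha$ one gets $H(\lambda) = |\lambda|\, U_{|\lambda|^{1/2}}\, H\, U_{|\lambda|^{1/2}}^{-1}$, and hence
\[
m(H(\lambda)) = U_{|\lambda|^{1/2}}\, m_\lambda(H)\, U_{|\lambda|^{1/2}}^{-1}, \qquad m_\lambda(s):=m(|\lambda|s).
\]
Writing $K^{(m_\lambda)}(x,y)$ for the kernel of $m_\lambda(H)$, the kernel of $m(H(\lambda))$ is $|\lambda|^{n/2} K^{(m_\lambda)}(|\lambda|^{1/2}x,\,|\lambda|^{1/2}y)$. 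This is the same scaling identity used for $A_j(\lambda)h_t^\lambda(x,y)$ in \S 2, and it has the same consequence: uniform Calder\'{o}n--Zygmund bounds on $K^{(m_\lambda)}$ transfer to uniform Calder\'{o}n--Zygmund bounds, independent of $\lambda$, for the kernel of $m(H(\lambda))$.

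The rescaled symbol $m_\lambda$ satisfies the Mihlin-type hypothesis uniformly in $\lambda$: the substitution $\mu = |\lambda|s$ gives
\[
|s|^k |m_\lambda^{(k)}(s)| = (|\lambda|s)^k |m^{(k)}(|\lambda|s)| = |\mu|^k |m^{(k)}(\mu)|,
\]
which is bounded for $|\mu|$ large by hypothesis, with constants independent of $\lambda$. Feeding this into the scalar Mihlin-type multiplier theorem for $H$ (Theorem 4.2.1 of \cite{T2}) --- whose proof, via the Mehler formula and heat-kernel estimates, realizes $m_\lambda(H)$ as a Calder\'{o}n--Zygmund operator with kernel and gradient bounds that depend only on the Mihlin constants of the symbol --- and then combining with the scaling of the first step, one obtains $|K_\lambda(x,y)|\le C|x-y|^{-n}$ and $|\nabla_{x,y}K_\lambda(x,y)|\le C|x-y|^{-n-1}$ for the kernel $K_\lambda$ of $m(H(\lambda))$, with $C$ independent of $\lambda$. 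The stated $\ell^2$-valued inequality then follows from the Cordoba--Fefferman theorem (Theorem 1.3, Chapter XII of \cite{AT}), exactly as in the treatment of $R_j(\lambda)$.

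The main obstacle is quantitative: one must extract from the proof of the scalar Mihlin-type theorem that the Calder\'{o}n--Zygmund constants of $K^{(m)}$ depend only on the Mihlin constants of $m$. In addition, one has to reconcile the hypothesis --- which controls $m^{(k)}$ only for $|\mu|$ large --- with the Mihlin-type control needed on the full Hermite spectrum when $|\lambda|$ is small. This is handled by a standard cut-off $m = m_0 + m_\infty$: $m_\infty$ satisfies a global Mihlin bound and is treated by the CZ argument above, while $m_0(H(\lambda))$ reduces to boundedly many low-energy spectral projections $P_k(\lambda)$, whose kernels are uniformly controlled via the rescaled Mehler formula.
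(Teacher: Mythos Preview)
Your reduction by dilation is correct, and so is the observation that the Mihlin constants of $m_\lambda(s)=m(|\lambda|s)$ are uniform in $\lambda$. The gap is in the step where you invoke Theorem~4.2.1 of \cite{T2} as a source of Calder\'on--Zygmund kernel bounds. That theorem is not proved by kernel estimates; it is proved by the Littlewood--Paley $g$-function method (this is exactly what the paper itself recalls immediately after the statement). Nothing in that argument produces pointwise bounds of the form $|K^{(m)}(x,y)|\le C|x-y|^{-n}$ or $|\nabla K^{(m)}(x,y)|\le C|x-y|^{-n-1}$ for a general Mihlin symbol $m$, so the input you need for the C\'ordoba--Fefferman theorem is simply not available from the cited reference. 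The analogy with \S2 breaks down here: for the first-order Riesz transforms the kernel is an \emph{explicit} derivative of the Mehler kernel, and Stempak--Torrea \cite{ST} establish the CZ bounds by direct computation; for an arbitrary $m$ no such explicit kernel is at hand, and obtaining standard CZ estimates with only $k\ge n/2$ derivatives on $m$ is a nontrivial (and not obviously true) assertion that would itself require proof.

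The paper's actual argument avoids this issue entirely. It keeps the $g$-function machinery of \cite{T2} and upgrades each piece to a vector-valued inequality: the functions $g_1^{\lambda}$ are themselves CZ operators (with $L^2(\R^+,t\,dt)$-valued kernels) whose constants are $\lambda$-independent, so C\'ordoba--Fefferman applies to \emph{them}; the $g_k^{*\lambda}$ functions are handled via the Fefferman--Stein maximal inequality; and the scalar pointwise bound $g_{k+1}^\lambda(m(H(\lambda))f,x)\le C_k\,g_k^{*\lambda}(f,x)$, with $C_k$ independent of $\lambda$, ties the two together. In short, the CZ/\,C\'ordoba--Fefferman step is applied to the square functions, where kernel estimates are available, rather than to $m(H(\lambda))$ itself, where they are not.
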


In proving this theorem we closely follow the proof of Theorem 4.2.1 in
\cite{T2} and hence we briefly recall the proof. Fixing $ \lambda > 0 $ for
the  sake of definiteness we consider the boundedness of $ m(H(\lambda)) $ on
$ L^p(\R^n).$ In \cite{T2} this is achieved by means of Littlewood-Paley
g-functions. We define, for each positive integer $ k $, the function
$$ (g_k^\lambda(f,x))^2 = \int_0^\infty  |\partial_t^kT_t^\lambda f(x)|^2
t^{2k-1} dt $$
where $ T_t^\lambda = e^{-tH(\lambda)} $ is the semigroup generated by
$ H(\lambda).$ In \cite{T2} it is shown that (see Theorem 4.1.2)
$ g_1^\lambda(f) $ can be
considered as a singular integral operator whose kernel is taking values
in the Hilbert space $ L^2(\R^+,tdt) $ and hence bounded on $ L^p(\R^n).$
Moreover, it is also shown that $ g_k^\lambda(f) $ is a constant multiple of
isometry on $ L^2(\R^n) $ and hence we have the equivalence of norms:
$$ C_1 \|f\|_p \leq \|g_1^\lambda(f)\|_p \leq C_2 \|f\|_p $$
for all $ 1 < p < \infty.$ Here, it can be verified that the constants
$ C_j, j=1,2 $ can be taken independent of $ \lambda.$ Moreover, we also have
the pointiest estimate $ g_k^\lambda(f,x) \leq C_k g_{k+1}^\lambda(f,x).$

A version of Cordoba-Fefferman theorem for CZ singular integrals whose
kernels are taking values in a Hilbert space is true. In fact an easy
modification of the proof given in Torchinsky \cite{AT} substantiates this
claim. By treating the g-functions as singular integral operators with kernels
taking values in $ L^2(\R^+,tdt) $ we can deduce the following result.

\begin{thm} For any choice of $ \lambda_j \in \R^* $ and $ f_j \in L^p(\R^n)$
we have
$$ \| (\sum_{j=0}^\infty | g_1^{\lambda_j}(f_j)|^2)^{1/2}\|_p \leq C
\|(\sum_{j=0}^\infty |f_j|^2)^{1/2}\|_p $$
for all $ 1 < p < \infty.$
The reverse inequality also holds.
\end{thm}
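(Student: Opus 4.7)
The plan is to realise each $g_1^{\lambda_j}$ as a Hilbert-space-valued Calder\'on-Zygmund operator with constants uniform in $\lambda_j$, and then to invoke the vector-valued C\'ordoba-Fefferman inequality for such operators.

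First, set $\mathcal{H} = L^2(\R^+, t\,dt)$ and define $S^\lambda \colon L^p(\R^n) \to L^p(\R^n; \mathcal{H})$ by $(S^\lambda f)(x)(t) = \partial_t T_t^\lambda f(x)$, so that $g_1^\lambda(f,x) = \|S^\lambda f(x)\|_{\mathcal{H}}$. The stated inequality becomes
\[
\Big\|\Big(\sum_j \|S^{\lambda_j} f_j\|_{\mathcal{H}}^2\Big)^{1/2}\Big\|_p \leq C \Big\|\Big(\sum_j |f_j|^2\Big)^{1/2}\Big\|_p,
\]
a vector-valued bound for a family of $\mathcal{H}$-valued Calder\'on-Zygmund operators with kernels $K^\lambda(x,y)(t) = \partial_t h_t^\lambda(x,y)$.

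Next, I would verify uniformity of the CZ constants in $\lambda$. From the intertwining $H(\lambda) = \lambda\,\delta_{\lambda^{1/2}} H \delta_{\lambda^{1/2}}^{-1}$ one reads off $h_t^\lambda(x,y) = \lambda^{n/2} h_{\lambda t}^1(\lambda^{1/2}x,\lambda^{1/2}y)$, and the change of variable $s = \lambda t$ inside the $\mathcal{H}$-norm then gives
\[
\|K^\lambda(x,y)\|_{\mathcal{H}} = \lambda^{n/2}\,\|K^1(\lambda^{1/2}x, \lambda^{1/2}y)\|_{\mathcal{H}},
\]
with the analogous identity for $\nabla_{x,y} K^\lambda$. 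Hence the size and smoothness estimates $\|K^\lambda(x,y)\|_{\mathcal H} \leq C|x-y|^{-n}$ and $\|\nabla_{x,y}K^\lambda(x,y)\|_{\mathcal H} \leq C|x-y|^{-n-1}$ follow with $C$ independent of $\lambda$ from their $\lambda=1$ counterparts, which are implicit in the treatment of $g_1^1$ as an $\mathcal{H}$-valued singular integral in Theorem~4.1.2 of \cite{T2}. The Hilbert-space-valued C\'ordoba-Fefferman theorem, a routine modification of the scalar argument in \cite{AT}, then yields the forward inequality applied to $\{S^{\lambda_j}\}$.

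For the reverse inequality I would use duality together with the Plancherel-type identity $\|f\|_2^2 = 4\|g_1^\lambda(f)\|_2^2$, which holds with an absolute constant since $\int_0^\infty |\partial_t e^{-t\mu}|^2 t\,dt = 1/4$ for every $\mu>0$ and $\{\Phi_\alpha^\lambda\}$ diagonalises $H(\lambda)$. Polarising gives $\int f_j \bar h_j\,dx = 4\langle S^{\lambda_j}f_j, S^{\lambda_j}h_j\rangle_{L^2(\R^n;\mathcal H)}$; summing in $j$, applying Cauchy-Schwarz pointwise in $(j,t)$, then H\"older in $x$ and the forward inequality to a test family $\{h_j\} \in L^{p'}(\ell^2)$, delivers the reverse estimate by $L^p$-$L^{p'}$ duality. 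The main obstacle is the kernel analysis: extracting the $\mathcal{H}$-norm bounds on $K^1$ and $\nabla K^1$ from Mehler's formula requires balancing the short-time singularity of $\partial_t h_t^1$ near $t=0$ against the exponential decay at large $t$ coming from the ground state of $H$, and verifying that $\int_0^\infty |\partial_t h_t^1(x,y)|^2 t\,dt$ has the correct $|x-y|^{-2n}$ decay, with analogous care for the derivative in $(x,y)$. Once these estimates are secured, the rest is standard vector-valued CZ machinery.
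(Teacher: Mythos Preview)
Your proposal is correct and follows essentially the same route as the paper: treat $g_1^\lambda$ as a Calder\'on--Zygmund operator with kernel taking values in $L^2(\R^+,t\,dt)$, check via the scaling relation that the CZ constants are independent of $\lambda$, and then apply the Hilbert-space-valued version of the C\'ordoba--Fefferman vector-valued inequality; the reverse inequality is obtained by the standard duality argument based on the $L^2$-isometry of $g_1^\lambda$, which the paper records just before the statement. Your write-up in fact supplies the details (the explicit scaling of $K^\lambda$ and the polarisation/duality step) that the paper only asserts.
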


In order to prove the multiplier theorem we also need to consider
$ g_k^{*\lambda} $ functions which are defined by
$$ (g_k^{*\lambda}(f,x))^2 = \int_{\R^n}\int_0^\infty t^{-n/2}
(1+t^{-1}|x-y|^2)^{-k}|\partial_tT_t^\lambda f(y)|^2 t dt dy.$$
If $ k > n/2 $ it is proved in \cite{T2} (see Theorem 4.1.3) that
$$ \|g_k^{*\lambda}(f)\|_p \leq C \|f\|_p,~~~~~~ p > 2.$$ The proof of this
depends on two facts: the boundedness of $ g_1^\lambda $ functions and the
Hardy-Littlewood maximal functions. By the celebrated theorem of
Fefferman-Stein the Hardy-Littlewood maximal function satisfies a vector
valued inequality, see Theorem 1.1, Chapter XII in \cite{AT}. Hence by
combining this with Theorem 3.2 above we get

\begin{thm} For any choice of $ \lambda_j \in \R^* $ and $ f_j \in L^p(\R^n)$
we have                                                                        $$ \| (\sum_{j=0}^\infty | g_k^{*\lambda_j}(f_j)|^2)^{1/2}\|_p \leq C
\|(\sum_{j=0}^\infty |f_j|^2)^{1/2}\|_p $$  for all $ p > 2 $
provided $ k > n/2.$
\end{thm}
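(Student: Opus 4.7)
The strategy is to transcribe the scalar proof of $\|g_k^{*\lambda}(f)\|_p \le C\|f\|_p$ from Theorem~4.1.3 of \cite{T2} into the $\ell^2$-valued setting, using duality against $L^{p/2}$ (which is available precisely because $p>2$) to linearize the square sum, and appealing to Theorem~3.2 at the last step.

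The first step is a pointwise-in-$\lambda$ integral estimate: for every $\phi\ge 0$ on $\R^n$,
$$ \int_{\R^n}(g_k^{*\lambda}(f,x))^2\,\phi(x)\,dx \ \leq\ C\int_{\R^n}(g_1^{\lambda}(f,y))^2\,M\phi(y)\,dy, $$
where $M$ denotes the Hardy-Littlewood maximal operator and $C$ is independent of $\lambda$. This follows by Fubini after noticing that, for each fixed $y$ and $t$, the function $x\mapsto t^{-n/2}(1+t^{-1}|x-y|^2)^{-k}$ is a normalized, radially decreasing $L^1$ kernel (its total mass is finite precisely because $k>n/2$), so its integral against $\phi$ is dominated by a constant multiple of $M\phi(y)$. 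Crucially, the weight appearing in the $g_k^*$-integrand does not involve $\lambda$, so the constant is uniform in $\lambda$.

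Summing this estimate over $j$ (with $\lambda_j,f_j$) and applying H\"{o}lder's inequality with exponents $p/2$ and $(p/2)'$, both of which lie in $(1,\infty)$ since $p>2$, one obtains
$$ \int_{\R^n}\sum_j (g_k^{*\lambda_j}(f_j,x))^2\,\phi(x)\,dx \ \leq\ C\,\Big\|\sum_j (g_1^{\lambda_j}(f_j))^2\Big\|_{p/2}\|M\phi\|_{(p/2)'}, $$
and the $L^{(p/2)'}$-boundedness of $M$ replaces $\|M\phi\|_{(p/2)'}$ by $C\|\phi\|_{(p/2)'}$. Taking the supremum over $\phi\ge 0$ with $\|\phi\|_{(p/2)'}\le 1$ and then extracting a square root yields
$$ \Big\|\Big(\sum_j (g_k^{*\lambda_j}(f_j))^2\Big)^{1/2}\Big\|_p \ \leq\ C\,\Big\|\Big(\sum_j (g_1^{\lambda_j}(f_j))^2\Big)^{1/2}\Big\|_p, $$
and Theorem~3.2 bounds the right-hand side by $C\,\|(\sum_j|f_j|^2)^{1/2}\|_p$, which is the desired inequality.

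I do not expect any serious obstacle: the argument is essentially a mechanical transcription of the scalar proof, with duality and H\"{o}lder converting the pointwise $g_1$-control of $g_k^*$ into an $\ell^2$-valued control. All $\lambda$-dependence is absorbed into the $g_1^{\lambda_j}(f_j)$-terms, which Theorem~3.2 is designed to handle. The one line worth verifying carefully is the uniformity of the kernel estimate in $\lambda$, but this is automatic because the $g_k^*$-weight is $\lambda$-free.
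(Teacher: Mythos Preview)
Your argument is correct and follows the same route the paper sketches: reduce the $g_k^{*\lambda}$-square function to the $g_1^{\lambda}$-square function via the duality estimate against $M\phi$, then invoke Theorem~3.2. One small remark: the paper cites the Fefferman--Stein vector-valued maximal inequality as the second ingredient, whereas your duality argument (pairing all the $j$-summands against a single $\phi$) only needs the scalar boundedness of $M$ on $L^{(p/2)'}$; this is a mild simplification but not a different approach.
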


Under the hypothesis on $ m $ it has been proved in \cite{T2}
(see Section 4.2) that the pointwise inequality
$$ g_{k+1}^\lambda(m(H(\lambda))f, x) \leq C_k g_k^{*\lambda}(f,x) $$ holds.
It can be checked, by following the proof carefully, that the constant
$ C_k $ can be taken independent of $ \lambda.$ It is then clear that a
vector valued analogue of the above estimate is valid. Hence, by appealing
to Theorems 3.2 and 3.3 we obtain Theorem 3.1. This takes care of the
R-boundedness of $ m(H(\lambda)).$

\subsection{R-boundedness of the derivative of $ m(H(\lambda))$}

We begin with the observation that the kernel of $  m(H(\lambda)) $ is
given by
$$ \lambda^{n/2} \sum_{k=0}^\infty m((2k+n)\lambda)
\Phi_k(\lambda^{1/2}x,\lambda^{1/2}y) $$
where $ \Phi_k(x,y) $ is the kernel of $ P_k.$ If we let $ m_\lambda(t) =
m(t\lambda) $ then, with obvious notations, we have
$$ m(H(\lambda))(x,y) = \lambda^{n/2}
m_\lambda(H)(\lambda^{1/2}x,\lambda^{1/2}y).$$ Using this we can easily prove

\begin{lem} $\lambda \frac{d}{d\lambda}m(H(\lambda)) $ is a linear
combination of operators of the form $ m(H(\lambda)),
H(\lambda)m'(H(\lambda)) $ and the commutators $ \delta_\lambda 
[A_j^2,m_\lambda(H)]\delta_\lambda^{-1} $ and $ \delta_\lambda [A_j^{*2},
m_\lambda(H)]\delta_\lambda^{-1} .$
\end{lem}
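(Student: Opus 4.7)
The plan is to exploit the operator form
$$m(H(\lambda)) = \delta_\lambda\, m_\lambda(H)\, \delta_\lambda^{-1}$$
of the kernel identity stated just above the lemma (where $\delta_\lambda f(x) = f(\lambda^{1/2}x)$), and to differentiate its right-hand side by the product rule. The commutator $[D, m_\lambda(H)]$ with the Euler field $D = x\cdot\nabla_x$ will then be expanded using the decomposition of $D$ in terms of $A_j$ and $A_j^*$ that was already used in the proof of Proposition 2.3.

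First I will record the elementary conjugation rules $\delta_\lambda x_j\delta_\lambda^{-1} = \lambda^{1/2}x_j$ and $\delta_\lambda\partial_j\delta_\lambda^{-1} = \lambda^{-1/2}\partial_j$, which are immediate from the definition of $\delta_\lambda$. They imply that $D$ commutes with every $\delta_\lambda$, that $\lambda\frac{d}{d\lambda}\delta_\lambda = \tfrac{1}{2}D\delta_\lambda$ and $\lambda\frac{d}{d\lambda}\delta_\lambda^{-1} = -\tfrac{1}{2}D\delta_\lambda^{-1}$, and that $\delta_\lambda H\delta_\lambda^{-1} = \lambda^{-1}H(\lambda)$. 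The chain rule applied to $m_\lambda(t) = m(\lambda t)$ gives $\lambda\frac{d}{d\lambda}m_\lambda(H) = \lambda H\,m'(\lambda H)$, whose $\delta_\lambda$-conjugate is $H(\lambda)m'(H(\lambda))$. Assembling these three pieces by the product rule and using that $D$ commutes with $\delta_\lambda^{\pm 1}$, I obtain
$$\lambda\frac{d}{d\lambda}m(H(\lambda)) = H(\lambda)m'(H(\lambda)) + \tfrac{1}{2}\,\delta_\lambda\bigl[D,\, m_\lambda(H)\bigr]\,\delta_\lambda^{-1}.$$

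To finish I will invoke the decomposition $D = \tfrac{n}{2}I + \tfrac{1}{4}\sum_{j=1}^n(A_j^{*2} - A_j^2)$ already exploited in Proposition 2.3. The $I$-piece drops out of the commutator and the remaining pieces give precisely
$$\tfrac{1}{8}\sum_{j=1}^n\delta_\lambda\bigl[A_j^{*2},\, m_\lambda(H)\bigr]\delta_\lambda^{-1} \;-\; \tfrac{1}{8}\sum_{j=1}^n\delta_\lambda\bigl[A_j^{2},\, m_\lambda(H)\bigr]\delta_\lambda^{-1},$$
so $\lambda\frac{d}{d\lambda}m(H(\lambda))$ emerges as the claimed linear combination (with vanishing coefficient on $m(H(\lambda))$ itself). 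The only step requiring genuine care is the bookkeeping in the product rule: if one works directly at the kernel level one picks up three separate $\tfrac{n}{2}m(H(\lambda))$ contributions — from the prefactor $\lambda^{n/2}$, from the integration by parts in $y$ on the $y\cdot\nabla_y$ term, and from the $\tfrac{n}{2}I$ piece of $D$ — that have to cancel. The operator-level route above is arranged so that these contributions never appear explicitly, and is the cleanest way to arrive at the stated identity.
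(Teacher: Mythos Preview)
Your proof is correct and follows essentially the route the paper intends: the paper only states the kernel identity $m(H(\lambda))(x,y)=\lambda^{n/2}m_\lambda(H)(\lambda^{1/2}x,\lambda^{1/2}y)$ and says ``using this we can easily prove'' the lemma, leaving the details (which mirror the argument for Lemma~2.2) to the reader. Your operator-level version $m(H(\lambda))=\delta_\lambda m_\lambda(H)\delta_\lambda^{-1}$ together with the product rule and the decomposition $D=\tfrac{n}{2}I+\tfrac14\sum_j(A_j^{*2}-A_j^2)$ is exactly that computation, organized so that the spurious $\tfrac{n}{2}m(H(\lambda))$ contributions cancel automatically rather than by hand; your remark that the coefficient of $m(H(\lambda))$ in fact vanishes is a correct sharpening of the stated lemma.
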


We have just shown that the families $ m(H(\lambda))$ and
$ H(\lambda)m'(H(\lambda)) $ are R-bounded under the hypothesis of Theorem 1.3.
To handle the commutators we proceed as in the case of Riesz transforms. We
need to consider the operators
$ \delta_\lambda A_j[A_j,m_\lambda(H)] \delta_\lambda^{-1} $ and
$ \delta_\lambda [A_j,m_\lambda(H)]A_j \delta_\lambda^{-1} .$
As before using $ m_\lambda(H)A_j = A_j m_\lambda(H+2) $ we get
$$ \delta_\lambda A_j[A_j,m_\lambda(H)] \delta_\lambda^{-1}= -\delta_\lambda
A_j^2 (m_\lambda(H+2)-m_\lambda(H)) \delta_\lambda^{-1}.$$
Finally, the above can be written in the form
$$ A_j(\lambda)^2 \int_0^1 m'(H(\lambda)+2\lambda s)) ds.$$
Since $ A_j(\lambda)^2H(\lambda)^{-1} $ are singular integral operators we
need to check the R-boundedness of
$$ \int_0^1 H(\lambda) m'(H(\lambda)+2\lambda s)) ds.$$
The hypothesis on $ m $ allows us to handle this family with estimates uniformly in $ s, 0< s < 1.$ This completes the proof of the R-boundedness of
$\lambda \frac{d}{d\lambda}M(H(\lambda)) $ and hence Theorem 1.3 is proved.

\subsection{Boundedness of the Bochner-Riesz means}

Recall that  the kernel of $ P_k(\lambda) $ is of the form  $ |\lambda|^{n/2}
\Phi_k(|\lambda|^{1/2}x,|\lambda|^{1/2}y) $ and hence
$$ B_R^\delta f(x,t) = \frac{1}{2\pi} \int_{-\infty}^\infty \int_{\R^n}
e^{-i\lambda t}|\lambda|^{n/2}
s_{R/|\lambda|}^\delta(|\lambda|^{1/2}x,|\lambda|^{1/2}y)
f^\lambda(y)dy d\lambda.$$
where
$$ s_{R}^\delta(x,y) = \sum_{(2k+n)\leq R}(1-(2k+n)/R)_+^\delta \Phi_k(x,y).$$
If $ D_rf(x,t) = r^{n+2}f(rx,r^2t) $ stands for the nonisotropic
dilation, then it is easy to see  that
$$ B_R^\delta D_r f(x,t) = D_r B_{R/r^2}^\delta f(x,t) $$ and hence
it is enough to prove the boundedness of $ B_1^\delta.$ Note that
$$ B_1^\delta f(x,t) =  \frac{1}{2\pi} \int_{-\infty}^\infty e^{-i\lambda t}
m_\delta(\lambda)f^\lambda(x) d\lambda $$
where
$$ m_\delta(\lambda) = \sum_{(2k+n)|\lambda|\leq 1}(1-(2k+n)|\lambda|)^\delta
P_k(\lambda) .$$
Thus we can view $ B_1^\delta $ as a Fourier multiplier corresponding to
the operator valued multiplier $ m_\delta(\lambda).$ Therefore, in order to prove Theorem 1.4 we only need to show that $ m_\delta(\lambda) $ and $ \lambda
\frac{d}{d\lambda}m_\delta(\lambda) $ are both R-bounded families of
operators.

Let
$$ S_R^\delta f(x) =  \sum_{(2k+n)\leq R}(1-(2k+n)/R)_+^\delta P_kf $$
stand for the Bochner-Riesz means associated to the Hermite operator $ H.$
Then it follows that
$$ m_\delta(\lambda) = \delta_{|\lambda|}
S_{|\lambda|^{-1}}^\delta \delta_{|\lambda|}^{-1} .$$
We make use of the following result in order to prove the R-boundedness of the
family $ m_\delta(\lambda).$ Let $ Mf $ stand for the Hardy-Littlewood maximal
function of $ f.$

\begin{thm} Assume that $ f \in L^p(\R^n), 1 \leq p \leq \infty $ and
$ \delta > (n-1)/2+1/6.$
Then $ \sup_{R > 0}|S_R^\delta f(x)| \leq C \bigg(Mf(x)+Mf(-x)\bigg).$
\end{thm}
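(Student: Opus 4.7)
The plan is to reduce the $n$-dimensional maximal estimate for $S_R^\delta$ to a one-dimensional pointwise kernel estimate for a Laguerre Bochner-Riesz kernel at the critical index, and then recognize the resulting integral against $|f|$ as a Hardy-Littlewood average centred at $x$ (and at $-x$).

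First, I would use the closed-form identity expressing the Hermite projection kernel $\Phi_k(x,y)=\sum_{|\alpha|=k}\Phi_\alpha(x)\Phi_\alpha(y)$ as a Laguerre polynomial $L_k^{n/2-1}$ evaluated at a quadratic form in $x,y$, multiplied by Gaussian factors (the Mehler-type closed form in \cite{T2}). Because $\Phi_\alpha$ has parity $(-1)^{|\alpha|}$, one has $\Phi_k(x,-y)=(-1)^k\Phi_k(x,y)$, so splitting $f$ into its even and odd parts under $y\mapsto -y$ allows me to assume that $f$ has definite parity; the kernel then concentrates either around $y=x$ or around $y=-x$, which explains why both $Mf(x)$ and $Mf(-x)$ appear on the right.

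Second, substituting this closed form into $s_R^\delta(x,y)$ reduces the estimate, uniformly in $R$, to a pointwise bound for the one-dimensional Laguerre Bochner-Riesz kernel
\[
K_R^{\delta,\alpha}(u)\;=\;\sum_{k\ge 0}\Bigl(1-\frac{2k+n}{R}\Bigr)_+^{\delta}\,L_k^{\alpha}(u)\,e^{-u/2},\qquad \alpha=\frac{n}{2}-1,
\]
evaluated at $u=\tfrac12|x-y|^2$ (respectively $\tfrac12|x+y|^2$). Third, I would invoke the sharp kernel estimate $\sup_{R>0}|K_R^{\delta,\alpha}(u)|\le C(1+u)^{-\alpha-1}$ valid for $\delta>\alpha+1/6=(n-1)/2+1/6$; this is the Laguerre analogue, for $\alpha=n/2-1$, of the one-dimensional Hermite critical-index result at $1/6$, and it is precisely where the hypothesis of the theorem is used. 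Integrating this uniform kernel bound against $|f(y)|$ and passing to polar coordinates around $x$ (respectively $-x$) then identifies the radial integral with an average of $|f|$ over balls centred at $x$ (respectively $-x$), hence bounded by $CMf(x)+CMf(-x)$.

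The main obstacle is the sharp one-dimensional estimate in the third step. The exponent $1/6$ is forced by the Airy-function asymptotics of $L_k^{\alpha}$ near its turning point $u\approx 4k+2\alpha+2$, and getting a uniform-in-$R$ bound at exactly $\delta=\alpha+1/6$, with no $\varepsilon$-loss, requires a delicate stationary-phase analysis across the three regimes $u\ll 4k$, $u\sim 4k$, $u\gg 4k$; this is the technical heart of the argument and is carried out in \cite{T2}. Once that one-dimensional kernel estimate is granted, the remaining steps, the closed form for $\Phi_k(x,y)$, the parity splitting, and the polar-coordinate identification with the maximal function, are structural and routine.
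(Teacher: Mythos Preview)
Your overall strategy---obtain a uniform pointwise bound on the Riesz kernel $s_R^\delta(x,y)$ and recognize the resulting integral against $|f|$ as $Mf(x)+Mf(-x)$---is exactly the route taken in \cite{T1},\cite{T2}, and your identification of the $1/6$ with the Airy regime and of the parity $\Phi_k(x,-y)=(-1)^k\Phi_k(x,y)$ with the appearance of $Mf(-x)$ is correct. The gap is in your first two steps: the ``closed form'' you invoke does not exist. The Hermite projection kernel $\Phi_k(x,y)=\sum_{|\alpha|=k}\Phi_\alpha(x)\Phi_\alpha(y)$ on $\R^n$ is \emph{not} a single Laguerre function of $\tfrac12|x-y|^2$ (or of $\tfrac12|x+y|^2$). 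What Mehler's formula actually gives, after rewriting the exponent in terms of $|x\pm y|^2$, is
\[
\Phi_k(x,y)=\pi^{-n/2}\sum_{j+l=k}(-1)^l\,L_j^{n/2-1}\bigl(\tfrac12|x-y|^2\bigr)e^{-|x-y|^2/4}\,L_l^{n/2-1}\bigl(\tfrac12|x+y|^2\bigr)e^{-|x+y|^2/4},
\]
a \emph{convolution in the index} of two Laguerre factors, one in each variable. Consequently, inserting the Bochner--Riesz weight $(1-(2k+n)/R)_+^\delta$ does \emph{not} produce a one-dimensional Laguerre Bochner--Riesz kernel $K_R^{\delta,n/2-1}(u)$ evaluated at a single $u$; the weight couples the two indices and the sum does not factor. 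Your step~3 therefore has nothing to act on, and the sharp one-variable Laguerre bound you quote cannot be applied. (You may be thinking of the \emph{special} Hermite projections on $\C^n$, for which the kernel genuinely is $L_k^{n-1}(\tfrac12|z|^2)e^{-|z|^2/4}$; that is a different operator.)

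The paper itself does not argue via Laguerre at all: it cites Theorem~3.3.5 of \cite{T2}, where $s_R^\delta(x,y)$ is represented by a contour integral of the Mehler kernel and estimated directly by stationary phase across the oscillatory, Airy, and exponential regimes. Those estimates yield bounds of the type $R^{n/2}(1+R^{1/2}|x-y|)^{-a}$ together with a companion term in $|x+y|$, from which the domination by $Mf(x)+Mf(-x)$ follows by the radial-averaging argument you describe. So the last step of your plan is fine; what needs to be replaced is the mechanism producing the kernel bound---you must go through the Hermite kernel analysis of \cite{T1},\cite{T2} rather than a (nonexistent) one-variable Laguerre reduction.
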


This theorem can be proved using the estimates given in \cite{T2}
(see Theorem 3.3.5). Details can be found in \cite{T1}. Using
this result it is not difficult to prove

\begin{thm} The family $ m_\delta(\lambda) $ is R-bounded on $ L^p(\R^n) $
for all $ \delta > (n-1)/2+1/6 $ and $ 1 < p < \infty.$
\end{thm}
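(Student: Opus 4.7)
The plan is to reduce R-boundedness of the family $m_\delta(\lambda)$ to a single uniform pointwise estimate, and then invoke the Fefferman--Stein vector-valued inequality. Since R-boundedness on $L^p(\R^n)$ is equivalent to the vector-valued square-function inequality recalled in the introduction, it suffices to control $(\sum_j |m_\delta(\lambda_j) f_j|^2)^{1/2}$ in $L^p$ by $(\sum_j |f_j|^2)^{1/2}$.

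First, using the identity $m_\delta(\lambda) = \delta_{|\lambda|} S_{|\lambda|^{-1}}^\delta \delta_{|\lambda|}^{-1}$ established just before the statement, I would write
\[
|m_\delta(\lambda) f(x)| = \bigl| \bigl(S_{|\lambda|^{-1}}^\delta (\delta_{|\lambda|}^{-1} f)\bigr)(|\lambda|^{1/2} x) \bigr|.
\]
Since $\delta > (n-1)/2 + 1/6$, Theorem 3.5 applies to the inner Bochner--Riesz mean with a constant independent of the dilation parameter $|\lambda|^{-1}$, giving
\[
|m_\delta(\lambda) f(x)| \leq C \bigl( M(\delta_{|\lambda|}^{-1} f)(|\lambda|^{1/2} x) + M(\delta_{|\lambda|}^{-1} f)(-|\lambda|^{1/2} x) \bigr).
\]

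Next, a direct change of variables in the definition of the Hardy--Littlewood maximal function shows that $M(\delta_\mu^{-1} f)(\mu^{1/2} x) = Mf(x)$ for every $\mu > 0$; the dilation simply reparametrizes the family of balls around $\mu^{1/2} x$ into the family of balls around $x$, and the volume factors cancel. Hence
\[
|m_\delta(\lambda) f(x)| \leq C \bigl( Mf(x) + Mf(-x) \bigr)
\]
uniformly in $\lambda \in \R^*$, which is the key pointwise bound.

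Finally, given sequences $\{\lambda_j\} \subset \R^*$ and $\{f_j\} \subset L^p(\R^n)$, I would square, sum in $j$, take $L^p$ norms, and apply the Fefferman--Stein vector-valued inequality for $M$ (Theorem 1.1, Chapter XII in \cite{AT}), using also that the reflection $f \mapsto f(-\cdot)$ is an $L^p$-isometry, to conclude
\[
\Bigl\| \Bigl(\sum_j |m_\delta(\lambda_j) f_j|^2\Bigr)^{1/2} \Bigr\|_p \leq C \Bigl\| \Bigl(\sum_j |f_j|^2\Bigr)^{1/2} \Bigr\|_p.
\]
There is no real obstacle: the only point requiring care is the dilation invariance of $M$, which is routine, and the uniformity in $R$ in Theorem 3.5 is precisely what permits the pointwise bound to be independent of $\lambda$.
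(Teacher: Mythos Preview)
Your proposal is correct and follows essentially the same approach as the paper's proof: reduce $m_\delta(\lambda)$ to a dilated Bochner--Riesz mean, apply Theorem~3.5 to get a pointwise maximal-function bound uniform in $\lambda$, use the dilation invariance of $M$, and finish with Fefferman--Stein. You are in fact slightly more careful than the paper in retaining the $Mf(-x)$ term and explicitly handling it via the reflection isometry.
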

\begin{proof}  In view of the relation between $ m_\delta(\lambda) $ and
$ S_R^\delta $ we need to show that
$$ \left \| \left(\sum_{j=1}^\infty
|\delta_{R_j}^{-1}S_{R_j}^\delta \delta_{R_j}f_j|^2 \right)^{1/2} \right \|_p  \leq C \left \| \left(\sum_{j=1}^\infty |f_j|^2 \right)^{1/2} \right \|_p $$
for all sequences $ R_j > 0 $ and $ f_j \in L^p(\R^n).$ By the result of
Theorem 3.5
$$ |\delta_{R_j}^{-1}S_{R_j}^\delta \delta_{R_j}f_j(x)|
\leq C \bigg(M(\delta_{R_j}f_j)(R_j^{-1/2}x)+
M(\delta_{R_j}f_j)(-R_j^{-1/2}x)\bigg) $$ and a simple calculation,
recalling the definition of $ Mf $, shows that
$$ |\delta_{R_j}^{-1}S_{R_j}^\delta \delta_{R_j}f_j(x)| \leq C Mf_j(x).$$
Thus we are required to prove
$$  \left \| \bigg(\sum_{j=1}^\infty
|Mf_j|^2 \bigg)^{1/2} \right \|_p  \leq C \left \| \bigg(\sum_{j=1}^\infty |f_j|^2 \bigg)^{1/2} \right \|_p $$
which is the Fefferman-Stein inequality for the maximal function.
\end{proof}

It remains to be proved that the family
$ \lambda \frac{d}{d\lambda}m_\delta(\lambda) $ is also R-bounded. This is done in the following theorem.

\begin{thm} For $ \delta > (n+1)/2+1/6 $ the family
$ \lambda \frac{d}{d\lambda}m_\delta(\lambda) $ is R-bounded on
$ L^p(\R^{n+1}), 1 < p < \infty.$
\end{thm}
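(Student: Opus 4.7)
The plan mirrors Lemma 3.4 of the multiplier theorem. Starting from the conjugated form $m_\delta(\lambda) = \delta_{|\lambda|} S_R^\delta \delta_{|\lambda|}^{-1}$ with $R = |\lambda|^{-1}$, I would differentiate using the product rule, exploiting the identities $R\frac{d}{dR}S_R^\delta = \delta(S_R^{\delta-1}-S_R^\delta)$, $\lambda\frac{d}{d\lambda}\delta_{|\lambda|} = \frac{1}{2}\delta_{|\lambda|}(x\cdot\nabla_x)$, and the algebraic decomposition $x\cdot\nabla_x = \frac{n}{2}I+\frac{1}{4}\sum_j(A_j^{*2}-A_j^2)$ already used in \S2.1. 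This yields, in close parallel with Lemma 3.4, a decomposition of $\lambda\frac{d}{d\lambda}m_\delta(\lambda)$ as a linear combination of $m_\delta(\lambda)$, $m_{\delta-1}(\lambda)$, and the conjugated commutators $\delta_{|\lambda|}[A_j^2,S_R^\delta]\delta_{|\lambda|}^{-1}$ and $\delta_{|\lambda|}[A_j^{*2},S_R^\delta]\delta_{|\lambda|}^{-1}$. The first two terms are R-bounded by Theorem 3.6 at orders $\delta$ and $\delta-1$; the hypothesis is calibrated precisely so that $\delta-1 > (n-1)/2 + 1/6$, placing the $(\delta-1)$-means inside the range of Theorem 3.6.

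For the commutators I would treat only $[A_j^2,S_R^\delta]$, the $A_j^{*2}$ case being symmetric. The intertwining $A_j^2 f(H) = f(H+4)A_j^2$ gives $[A_j^2,S_R^\delta] = (m_\delta^R(H+4)-m_\delta^R(H))A_j^2$, and the fundamental theorem of calculus rewrites the bracketed multiplier difference as $-\frac{\delta}{R}\int_0^4 (1-(H+u)/R)^{\delta-1}_+\,du$, an integral of Bochner-Riesz means of order $\delta-1$. Conjugating by $\delta_{|\lambda|}$ and commuting $A_j(|\lambda|)^2$ across via $f(H(|\lambda|))A_j(|\lambda|)^2 = A_j(|\lambda|)^2 f(H(|\lambda|)-4|\lambda|)$, one finds that $\delta_{|\lambda|}[A_j^2,S_R^\delta]\delta_{|\lambda|}^{-1}$ is an integral over $u\in[0,4]$ of operators of the form $A_j(|\lambda|)^2\bigl(1-H(|\lambda|)/\rho\bigr)^{\delta-1}_+$ with $\rho = 1+(4-u)|\lambda|$, which runs over a compact subset of $(0,\infty)$ on the essential support $\{|\lambda|\leq 1/n\}$ of $m_\delta(\lambda)$.

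The hard part is showing that this integrand is R-bounded in $\lambda$ uniformly in $u$: the factor $A_j(|\lambda|)^2$ is an unbounded differential operator with nontrivial $\lambda$-dependence, and the bounded $u$-range does not compensate. I would handle this by factoring $A_j(|\lambda|)^2 = \bigl(A_j(|\lambda|)^2 H(|\lambda|)^{-1}\bigr)\cdot H(|\lambda|)$ --- the first factor being a higher-order Riesz transform (R-bounded in $\lambda$ by the material of \S2.2 underlying Theorem 1.2, via its reduction to the first-order Riesz transforms) --- and then recycling $H(|\lambda|)$ into the Bochner-Riesz kernel via the elementary identity $t(1-t/\rho)^{\delta-1}_+ = \rho\bigl((1-t/\rho)^{\delta-1}_+-(1-t/\rho)^{\delta}_+\bigr)$. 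This exhibits the integrand as the composition of two R-bounded families: the higher-order Riesz transform, and a bounded combination of the Bochner-Riesz-type multipliers $(1-H(|\lambda|)/\rho)^{\delta-1}_+$ and $(1-H(|\lambda|)/\rho)^\delta_+$, whose R-boundedness uniformly in $\rho$ in the compact range follows from the same pointwise maximal bound of Theorem 3.5, which is uniform in the Bochner-Riesz radius. Since pointwise products of R-bounded families are R-bounded and Minkowski's inequality absorbs the integration over $u$, this completes the R-boundedness of the commutator, and hence of $\lambda\frac{d}{d\lambda}m_\delta(\lambda)$. It is precisely this recycling step which explains why the hypothesis has to be stronger by one derivative than that of Theorem 3.6.
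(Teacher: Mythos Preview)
Your proposal is correct and follows essentially the same route as the paper: invoke Lemma~3.4 with $m(t)=(1-t)_+^\delta$, handle the resulting $(1-H(\lambda))_+^\delta$ and $H(\lambda)(1-H(\lambda))_+^{\delta-1}$ by Theorem~3.6, and treat the commutator terms by the factoring $A_j(\lambda)^2=A_j(\lambda)^2H(\lambda)^{-1}\cdot H(\lambda)$ together with the recycling identity $t(1-t)_+^{\delta}=(1-t)_+^{\delta}-(1-t)_+^{\delta+1}$. The only difference is that the paper skips your explicit commutator computation (the intertwining and the $u$-integral) and simply bounds each term $A_j(\lambda)^2(1-H(\lambda))_+^\delta$ of the commutator separately, which yields orders $\delta$ and $\delta+1$ rather than $\delta-1$ and $\delta$; this is a minor streamlining, not a different argument.
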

\begin{proof} Applying Lemma 3.4 with $ m(t) = (1-t)_+^\delta $ we see that
$ \lambda \frac{d}{d\lambda}m_\delta(\lambda) $ is a linear combination of
$ (1-H(\lambda))_+^\delta,
H(\lambda)((1-H(\lambda))_+^{\delta-1} $ and commutators of the form $
[A_j(\lambda)^2, (1-H(\lambda))_+^\delta] $ and
$ [A_j(\lambda)^{*2}, (1-H(\lambda))_+^\delta] .$ The first two families are handled as in the previous theorem. We prove the R-boundedness of
$ [A_j(\lambda)^2, (1-H(\lambda))_+^\delta] $ and the other family can be
similarly handled. It is enough to look at $
A_j(\lambda)^2 (1-H(\lambda))_+^\delta  $  which we write as
$$ A_j(\lambda)^2 (1-H(\lambda))_+^\delta  = A_j(\lambda)^2 H(\lambda)^{-1}
H(\lambda)(1-H(\lambda))_+^\delta  .$$
But $ A_j(\lambda)^2 H(\lambda)^{-1}$ are singular integral operators and
$$ H(\lambda)(1-H(\lambda))_+^\delta = (1-H(\lambda))_+^\delta -
(1-H(\lambda))_+^{\delta+1} .$$ Thus
$ A_j(\lambda)^2 (1-H(\lambda))_+^\delta  $ is R-bounded.
\end{proof}

\begin{center}
{\bf Acknowledgments}

\end{center}
The work of the last author is supported  by J. C. Bose Fellowship from
the Department of Science and Technology (DST).


\begin{thebibliography}{99}

\bibitem{CMZ} T. Coulhon, D . Mueller and J. Zienkiewicz, \textit{About Riesz
transforms on Heisenberg groups}, Math. Annalen {\bf 312}(1996), 369-379.

\bibitem{ERS} A. F. M. ter Elst, D. W. Robinson and A. Sikora, \textit{ Heat
kernels and Riesz transforms on nilpotent Lie groups}, Colloq. Math. {\bf 74}
(1997), 191-218.

\bibitem{F} G. B. Folland, \textit{Harmonic analysis in phase space}, Annals of Mathematics Studies, 122. Princeton University Press, Princeton, NJ, 1989.

\bibitem{G} D. Geller, \textit{Spherical harmonics, the Weyl transform and
the Fourier transform on the Heisenberg group}, Can. J. Math. {\bf 36}(1984),
615-684.

\bibitem{RM} R. Meyer, \textit{$L^p $ estimates for the Grushin operator},
arXiv:0709.2188 (2007)
\bibitem{pksstv} P. K. Sanjay and S. Thangavelu, \textit{Revisiting Riesz transforms on
Heisenberg groups}, preprint.
\bibitem{ST} K. Stempak and J. L. Torrea, \textit{Poisson integrals and
Riesz transforms for the Hermite function expansions with weights}, J. Funct.
Anal. 202 (2003), 443-472.

\bibitem{T1} S. Thangavelu, \textit{Summability of Hermite expansions I,II},
Trans. Amer. Math. Soc. 314 (1989), 119-170.

\bibitem{T2} S. Thangavelu, \textit{Lectures on Hermite and Laguerre
expansions}, Math. Notes No. 42, Princeton Univ. Press (1993).

\bibitem{T3}  S. Thangavelu, \textit{ An introduction to the uncertainty
principle}, Prog. Math. {\bf 217}(2004), Birkhauser-Boston.


\bibitem{AT} A. Torchinsky, \textit{Real variable methods in harmonic
analysis}, Dover publications, Inc., Mineola, New York (2004)

\bibitem{LW} L. Weis, \textit{Operator valued Fourier multiplier theorems and
maximal $ L^p $ regularity}, Math. Ann. 319 (2001), 735-758.




\end{thebibliography}
\end{document}